\documentclass{amsart}
\usepackage{amssymb}
\usepackage{txfonts}
\usepackage{amsmath}
\usepackage{enumerate}
\usepackage{comment}
\usepackage{hyperref}

\vfuzz2pt 

\newtheorem{thm}{Theorem}[section]

\newtheorem{lem}[thm]{Lemma}
\newtheorem{prop}[thm]{Proposition}

\theoremstyle{definition}
\newtheorem{defn}[thm]{Definition}
\theoremstyle{remark}
\newtheorem{rem}[thm]{Remark}
\numberwithin{equation}{section}

\def \R {\mathbb{R}}

\def \dist {\operatorname{dist}}

\begin{document}
	\title[]
	{pseudolocality and uniqueness of Ricci flow on almost Euclidean noncompact manifolds
	}
	
	\author{Liang Cheng,  Yongjia Zhang}

	
	\subjclass[2020]{Primary 53E20; Secondary 	35K45.}

	\keywords{pseudo-locality theorem, strong uniquess, noncompact Ricci flow}
	
	\thanks{Liang Cheng's  research is partially supported by
		Natural Science Foundation of China 12171180. }
  \thanks{ Yongjia Zhang's research is partially supported by the starting-up grant of Shanghai Jiao Tong University.
	}
	
	\address{School of Mathematics and Statistics $\&$ Hubei Key Laboratory of Mathematical Sciences, Central  China Normal University, Wuhan, 430079, P.R.China}
	
	\email{chengliang@ccnu.edu.cn }
	
	\address{School of Mathematical Sciences, Shanghai Jiao Tong University, Shanghai, 200240, P.R.China	}
	
	\email{sunzhang91@sjtu.edu.cn}
	\maketitle

	\begin{abstract}
	In this paper, we prove a pseudolocality-type theorem for $\mathcal L$-complete noncompact  Ricci flow which may not have bounded sectional curvature; with the help of it we study the uniqueness of the Ricci flow on noncompact manifolds. In particular, we prove	
		the strong uniqueness theorem for the $\mathcal L$-complete Ricci flow
		 on the Euclidean space.  This partially answers a question proposed by  B-L.~Chen \cite{C}.
	\end{abstract}

	\section{Introduction}
	
	The Ricci flow is a geometric evolution equation introduced by Hamilton \cite{H1}, which deforms a Riemannian manifold by the Ricci curvature, namely,
	$$\frac{\partial}{\partial t}g_t=-2\mathrm{Ric}_{g_t}.$$
	On the one hand, the Ricci flow equation is nonlinear, which implies that almost certainly a Ricci flow develops singularity and does not exist for all time; see \cite{H2}. By a thorough analysis of the singularities in dimension three \cite{P1}, Perelman successfully overcame the obstacles in solving the Poincar\'e and geometrization conjuectures \cite{P1,P2,P3} using Hamilton's program \cite{H2}. On the other hand, the Ricci flow equaiton is parabolic, and consequently, many problems related to parabolic partial differential equations arise in this field. For instance, the short-time existence problem (c.f. \cite{De,H3,Sh}), the long-time existence problem (c.f. \cite{CW,H2,Se,W1,W2}), and the uniqueness problem (c.f. \cite{CL,K,LT}).

	Another interesting problem in the study of parabolic equations is that of the strong uniqueness. 
	In general, the solution to the initial value problem of a parabolic equation on an unbounded region is not unique.	
	For instance, the $1$-dimensional linear heat equation $u_t-u_{xx}=0$ on $\R^1$ admits a nontrivial solution with the initial value $u(\cdot,0)\equiv 0$, namely, the well-known Tychonoff example
	$$u(x,t)=\sum_{k=0}^\infty\frac{x^{2k}}{(2k)!}\frac{d^k}{dt^k}\exp\left(-\tfrac{1}{t^2}\right).$$
	So it is natural to ask whether or not the solution to the Ricci flow equation is always unique provided that the initial data is good. 
	
	In this respect, 	
	B-L.~Chen \cite{C}  proved a strong uniqueness result in dimension three, namely, starting from a $3$-dimensional complete Riemannian manifold with bounded and nonnegative sectional curvature, two Ricci flows must be identical to each other for a short time. As a consequence, a Ricci flow starting from the standard $3$-dimensional Euclidean space must always remain Euclidean. It is therefore interesting to ask whether such property is true for the Euclidean space in higher dimensions. We shall give an affirmative answer to this question with some additional assumptions.

	At the end of his paper \cite{C}, Chen mentioned that the strong uniqueness theorem can be obtained from a (more general) pseudolocality theorem, which states that the Ricci flow cannot evolve an almost Euclidean region immediately into a high-curvature region. Let us recall Perelman's well-known pseudolocality theorem \cite{P1} (c.f. \cite{CTY}).

	\begin{thm}[Perelman's pseudolocality theorem]\label{perelman_pseudo_locality}
		For any $\alpha>0$ there exist $\delta>0$ and $\epsilon_0>0$ depending on $\alpha$ and $n$ with the following preperty. Let  $(M^n,g_t)_{t\in [0,\epsilon^2]}$ be a complete Ricci flow on a noncompact manifold with bounded sectional curvature at each time on $(0,\epsilon^2)$, where $\epsilon \in (0,\epsilon_0]$, such that 
		\begin{equation}\label{small_scalar_cur}
		R_{g_0}\ge -1 \quad\text{ on }\quad B_{g_0}(x_0,1),
		\end{equation}
		and
		\begin{equation}\label{almost_euclidean_iso}
		\left(\operatorname{Area}_{g_0}(\partial \Omega)\right)^{n} \geq(1-\delta) n^n\omega_{n}\left(\operatorname{Vol}_{g_0}(\Omega)\right)^{n-1}
		\end{equation}
		for any regular domain $\Omega\subset B_{g_0}(x_0,1)$. 
		Then we have
		$$
		|Rm|(x,t)\le \frac{\alpha}{t}+\frac{1}{\epsilon^2},
		$$
		for any $d_{g_t}(x_0,x)<\epsilon$ and $t\in (0,\epsilon^2]$. 
	\end{thm}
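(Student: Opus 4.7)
The plan is to adapt Perelman's contradiction strategy to the $\mathcal L$-complete, possibly unbounded-curvature setting. Suppose the conclusion fails for some fixed $\alpha > 0$: there exist sequences $\delta_k \to 0$, $\epsilon_k \to 0$, $\mathcal L$-complete Ricci flows $(M_k^n, g_k(t))_{t \in [0,\epsilon_k^2]}$ satisfying (\ref{small_scalar_cur}) and (\ref{almost_euclidean_iso}), together with bad points $(y_k, s_k)$ at which $|Rm|(y_k, s_k) > \alpha/s_k + \epsilon_k^{-2}$ and $d_{g_{s_k}}(y_k, x_0) < \epsilon_k$. A Hamilton--Perelman point-picking argument (applied at parabolic scale $\epsilon_k$) then upgrades this to a ``maximal'' bad point $(x_k, t_k)$ with $Q_k := |Rm|(x_k, t_k) \to \infty$, $t_k Q_k \to \infty$, $d_{g_{t_k}}(x_k, x_0) < 2\epsilon_k$, and $|Rm| \leq 4 Q_k$ throughout a backward parabolic neighborhood of $(x_k, t_k)$ of size $\sim Q_k^{-1/2}$. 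Parabolic rescaling by $Q_k$ around $(x_k, t_k)$ produces a sequence of Ricci flows with bounded local curvature and $|Rm|(0,0) = 1$ that subconverges smoothly to a nontrivial ancient-type limit.

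The main step is to apply Perelman's monotonicity of the reduced volume $\tilde V$ based at $(x_k, t_k)$. On one hand, the almost-Euclidean isoperimetric inequality (\ref{almost_euclidean_iso}) at $t=0$, via its classical equivalence with the logarithmic Sobolev inequality, forces the Euclidean log-Sobolev inequality to hold on $B_{g_0}(x_0, 1)$ with an error $o(1)$ as $k \to \infty$; combined with the scalar lower bound (\ref{small_scalar_cur}), this implies that $\tilde V(\tau) \to 1$ as $\tau \to t_k$ (that is, at backward-times close to the basepoint). On the other hand, the rescaled smooth limit carries curvature $1$ at its origin, so its limiting reduced volume at any fixed positive rescaled time is strictly less than $1 - \eta(n)$ for some $\eta(n) > 0$ coming from the standard curvature-versus-entropy gap; by monotonicity of $\tilde V$, the pre-rescaled reduced volume at, say, time $t_k/2$ satisfies the same upper bound. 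These two estimates are mutually incompatible for $k$ large, giving the desired contradiction.

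The principal technical obstacle is to carry out this entropy-monotonicity argument rigorously for an $\mathcal L$-complete flow with possibly unbounded curvature, and to localize it to the region $B_{g_0}(x_0, 1)$. Two points require care: first, the reduced distance $\ell$, the conjugate heat kernel based at $(x_k, t_k)$, and the monotonicity of $\tilde V$ all need to be justified in the absence of a global curvature bound, and this is precisely what the $\mathcal L$-completeness hypothesis is designed to supply; second, since the near-Euclidean log-Sobolev information is available only inside $B_{g_0}(x_0, 1)$, one must show that the conjugate heat kernel (equivalently, the $\mathcal L$-exponential measure from $(x_k, t_k)$) does not leak a nontrivial mass out of this ball before time $0$. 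I expect this to be handled by a cut-off function argument combined with upper bounds on $\ell$ along $\mathcal L$-geodesics that stay inside the good region, in the spirit of Perelman's original cut-off construction; making this localization effective in the noncompact, unbounded-curvature regime is what I anticipate to be the main difficulty.
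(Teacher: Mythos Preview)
The paper does not prove this statement. Theorem~\ref{perelman_pseudo_locality} is stated in the introduction as a known result of Perelman \cite{P1}, extended to the complete noncompact bounded-curvature setting by Chau--Tam--Yu \cite{CTY}; no proof is given or claimed. The paper's own contribution is Theorem~\ref{pseudo_locality}, which is a \emph{different} statement: it replaces the bounded-curvature hypothesis by $\mathcal L$-completeness, but in exchange assumes the scalar lower bound and the almost-Euclidean isoperimetric inequality \emph{globally on $M$}, not just on a unit ball, and draws a global conclusion.

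Your proposal conflates these two theorems. You speak of ``$\mathcal L$-complete, possibly unbounded-curvature'' flows, but Theorem~\ref{perelman_pseudo_locality} assumes bounded sectional curvature and says nothing about $\mathcal L$-completeness. Conversely, the localization difficulty you flag as the ``main difficulty''---controlling leakage of the conjugate heat kernel or $\mathcal L$-exponential measure out of $B_{g_0}(x_0,1)$---simply does not arise in the paper's proof of Theorem~\ref{pseudo_locality}, precisely because the hypotheses there are global. The paper's Lemma~\ref{reduced_volume_estimate} uses the global isoperimetric inequality to feed directly into the log-Sobolev inequality (Theorem~\ref{logsob}) for the test function $(4\pi T)^{-n/4}e^{-\ell/2}$ on all of $M$, yielding $\mathcal V(T)\ge e^{-T}(1-\delta)$ with no cut-off needed; this is then played against the gap $\mathcal V_i(t_i-\tilde t_i)\le 1-\beta_1$ from Lemma~\ref{reduced_volume}. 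So the overall architecture you describe (point-picking, rescaled limit with $|\mathrm{Rm}|=1$ at the origin, reduced-volume gap versus almost-maximal reduced volume at the initial slice) is exactly the paper's strategy for Theorem~\ref{pseudo_locality}, but the hard localization step you anticipate is deliberately sidestepped by strengthening the hypotheses.

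For Theorem~\ref{perelman_pseudo_locality} itself, Perelman's original argument (and the Chau--Tam--Yu verification) uses the $\mathcal W$-entropy and the conjugate heat kernel rather than the reduced volume, and the bounded-curvature assumption is what makes those estimates and the required cut-off construction go through. A reduced-volume proof of the genuinely local statement, along the lines you sketch, would indeed require the localization you describe, and that is not carried out in this paper.
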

	
The original version of Perelman's pseudolocality theorem \cite{P1} was proved
under the assumption of the manifold being closed. In the complete and noncompact case, this result was verified by Chau, Tam and Yu \cite{CTY}.  Later, Tian and Wang \cite{TW} proved another version of Perelman's
pseudolocality theorem in which they showed that the conditions (\ref{small_scalar_cur}) and (\ref{almost_euclidean_iso}) in Theorem \ref{perelman_pseudo_locality} can be replaced
by small Ricci curvature and almost Euclidean volume ratio.
 Wang \cite{W3} improved both Perelman's and Tian-Wang's pseudolocality theorems and proved that (\ref{small_scalar_cur}) and (\ref{almost_euclidean_iso}) in Theorem \ref{perelman_pseudo_locality} can be replaced
 by the smallness of the localized version of Perelman's $\mu$-functional.
One may also see \cite{CMZ} for another proof of the pseudolocality theorem based on Bamler's $\epsilon$-regularity theorem \cite{RB1}.

	It is to be noted that the above pseudolocality theorems  require bounded sectional curvature on the whole manifold within each compact  time-interval, under which condition the uniqueness of the Ricci flow is already established by Chen-Zhu \cite{CZ}.  Hence we cannot use the previous pseudolocality theorems to study the uniquess problems of the Ricci flow.
	In this paper we are bound to find a pseudolocality theorem valid in a more general setting, or at least with weaker regularity requirements after the initial time. It turns out that the $\mathcal{L}$-complete condition is a notion that fits our purpose.

	\begin{defn}\label{def:l-complete}
		A complete smooth Ricci flow $(M^n,g_t)_{t\in [0,T]}$ is called $\mathcal L$-complete on $M^n\times [0,T]$ if  the following holds. For any $s,\, t\in[0, T]$ with $s<t$ and any compact set $E\subset M$, there exists another compact set $F\subset M$ that contains $E$, such that if $x,\, y\in E$, then there exists at least one minimal $\mathcal{L}$-geodesic connecting $(x,t)$ and $(y,s)$, and \emph{all} possible minimal $\mathcal{L}$-geodesics connecting $(x,t)$ and $(y,s)$ are contained (spatially) in $F$.
	\end{defn}
	
	First of all, we will show that under certain conditions the Ricci flow is $\mathcal L$-complete. 	
	
	\begin{thm}\label{conditions_l-complete}
		Let  $(M^n,g_t)_{t\in [0,T]}$ be a Ricci flow such that \textbf{either} one of the following is true.
		\begin{enumerate}[(1)]
			\item $R_{g_t}(x)\geq -k\quad\text{for all}\quad (x,t)\in M\times[0,T]$, where $k$ is a nonnegative constant, and there exist a positive constant $c$ and a complete smooth Riemannian metric $\bar g$ on $M$, such that 
			\begin{align*}
				g_t\ge c\bar g\quad\text{ for all }\quad t\in[0,T].
			\end{align*}
			\item $R_{g_t}(x)\geq -k\quad\text{for all}\quad (x,t)\in M\times[0,T]$, where $k$ is a nonnegative constant,  and there exist constants $\alpha>0$, $\beta>0$, $\gamma\in(0,1)$, and a fixed point $x_0\in M$, such that
			\begin{align*}
				\mathrm{Ric}_{g_t}(x)\geq -\alpha t^{-\gamma}\ln\left(\dist_{g_t}(x_0,x)+\beta\right)g_t\quad\text{ for all }\quad (x,t)\in M\times [0,T].
			\end{align*}
			Moreover, $T\le \overline{T}(\alpha,\gamma)$, where $\overline{T}(\alpha,\gamma)$ signifies a positive constant depending only on $\alpha$ and $\gamma$.
		\end{enumerate}
		Then $(M^n,g_t)_{t\in[0,T]}$ is $\mathcal{L}$-complete on $[0,T]$.
	\end{thm}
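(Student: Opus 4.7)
The plan is to verify Definition~\ref{def:l-complete} in two stages: first produce a uniform upper bound on the $\mathcal L$-length of any minimal $\mathcal L$-geodesic joining two points of the given compact set $E$, and then convert this bound, via the curvature hypotheses of either case, into an a~priori spatial confinement of such minimizers in a larger compact set $F$. Existence of a minimizer then follows by the direct method: a minimizing sequence has uniformly bounded $\mathcal L$-energy, hence (via Cauchy--Schwarz) equicontinuity in $\tau$, and localization inside $F$ permits an Arzel\`a--Ascoli extraction.

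For the upper bound, for fixed $0\le s<t\le T$ and $x,y\in E$ one can plug a simple test path into $\mathcal L$ --- for instance, a constant-in-space curve at $y$ on $[s,(s+t)/2]$ concatenated with a $g_{(s+t)/2}$-minimizing geodesic from $y$ to $x$ on $[(s+t)/2,t]$. The $R\sqrt{\tau}$-contribution is handled by $R\ge-k$ (which holds in both cases), and the kinetic contribution by compactness of $E$ in the $g_{(s+t)/2}$-metric; together these yield $\mathcal L_{\min}(x,y;s,t)\le L_0(E,s,t)$.

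Let $\tau$ denote the backward time variable measured from $t$. Any curve with $\mathcal L(\gamma)\le L_0$ satisfies
\begin{equation*}
\int_0^{t-s}\!\sqrt{\tau}\,|\dot\gamma|^2_{g_\tau}\,d\tau \;\le\; L_0 - \int_0^{t-s}\!\sqrt{\tau}\,R\,d\tau,
\end{equation*}
and Cauchy--Schwarz converts this into a bound on the $g_\tau$-arc-length $\int |\dot\gamma|_{g_\tau}\,d\tau$. In \emph{Case~(1)}, $R\ge -k$ renders the right-hand side a universal constant, and $g_\tau\ge c\bar g$ transfers the arc-length bound to a $\bar g$-arc-length bound; the completeness of $\bar g$ then forces $\gamma$ into a $\bar g$-closed neighborhood of $E$, which is compact and serves as $F$.

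\emph{Case~(2)} is more delicate, since the scalar-curvature lower bound $R\ge -n\alpha\tau^{-\gamma}\ln(\dist_{g_\tau}(x_0,\gamma(\tau))+\beta)$ couples the right-hand side of the display to the excursion of $\gamma$ itself. My plan here is a Gronwall-type closing argument: set $r(\tau_0)=\sup_{\sigma\in[0,\tau_0]}\dist_{g_{t-\sigma}}(x_0,\gamma(\sigma))$, use the Ricci lower bound together with the standard ODE $\partial_t \dist_{g_t}\ge -\int\mathrm{Ric}$ along minimizing geodesics to compare distances at different times in terms of $r$, and substitute back into the $\mathcal L$-length inequality to obtain an integral inequality for $r$ itself. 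Since $\tau^{1/2-\gamma}\ln(\cdot+\beta)$ is integrable near $\tau=0$ for $\gamma\in(0,1)$, choosing $T\le\overline T(\alpha,\gamma)$ small enough makes the Gronwall coefficient small, and the resulting implicit inequality has a bounded solution $r\le D(E,s,t,\alpha,\beta,\gamma)$ independent of the minimizer. The principal obstacle is precisely this decoupling of the distance bound from the energy bound, and the smallness of $T$ is exactly what permits it. Once $r$ is bounded, Cauchy--Schwarz as in Case~(1) places $\gamma$ in the $g_s$-metric ball of controlled radius about $x_0$, which is compact by $g_s$-completeness of $M$, giving the desired $F$.
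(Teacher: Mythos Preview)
Your overall framework --- bound the $\mathcal L$-energy from above by a test curve, then show any curve with bounded $\mathcal L$-energy is confined to a compact set, and conclude by the direct method --- is exactly the paper's strategy, and your treatment of Case~(1) is correct and matches the paper's Proposition~3.3.

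In Case~(2), however, you have misidentified the obstacle. You write that the difficulty is that ``the scalar-curvature lower bound $R\ge -n\alpha\tau^{-\gamma}\ln(\dist_{g_\tau}(x_0,\gamma(\tau))+\beta)$ couples the right-hand side of the display to the excursion of $\gamma$.'' But Case~(2) \emph{also} assumes $R\ge -k$ for a fixed constant $k$; so the kinetic integral $\int_0^{t-s}\sqrt{\tau}\,|\dot\gamma|^2_{g_\tau}\,d\tau$ is bounded by $L_0+\tfrac{2k}{3}T^{3/2}$ just as in Case~(1), with no coupling. The real issue is the step you gloss over at the end: Cauchy--Schwarz only gives a bound on $\int|\dot\gamma|_{g_\tau}\,d\tau$, which is an arc-length in a \emph{varying} metric and does not directly control $\dist_{g_{s'}}(x_0,\gamma(\tau))$ for any fixed $s'$. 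In Case~(1) this transfer was achieved by $g_\tau\ge c\bar g$; in Case~(2) no such comparison is available, and this is where the Ricci lower bound must enter --- not to control $R$, but to compare the norms $|\dot\gamma(\tau)|_{g(\tau)}$ and $|\dot\gamma(\tau)|_{g(\bar\tau)}$ of a \emph{tangent vector} at two different times.

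The paper handles this by a contradiction argument: assume the curve first reaches $\partial B_{g(\bar\tau)}(x_0,A)$ at some $\bar\tau$, integrate the Ricci lower bound along the portion $\gamma|_{[\bar\tau,\tau]}$ (which lies in the $A$-ball) to get $|\dot\gamma|^2_{g(\tau)}\gtrsim A^{-C\,T^{1-\gamma}}|\dot\gamma|^2_{g(\bar\tau)}$, and then Cauchy--Schwarz in the \emph{fixed} metric $g(\bar\tau)$ yields $L_0\gtrsim A^{-C\,T^{1-\gamma}}\cdot A^2/\sqrt{T}$, a contradiction for large $A$ once $T$ is small enough that $C\,T^{1-\gamma}<2$. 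Your Gronwall formulation can be made to work along these lines, but the quantity being closed is the metric-comparison factor $|\dot\gamma|_{g(\tau)}/|\dot\gamma|_{g(\bar\tau)}$ (equivalently, the excursion $r$ appearing through the Ricci bound on tangent vectors), not the $R$-term. Once you relocate the coupling to the right place, your argument aligns with the paper's.
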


	On the other hand, the significance of the $\mathcal{L}$-complete assumption is that with it a version of Perelman's pseudolocality theorem can be proved. Indeed, it is well-known that Perelman's entropy and reduced geometry reflect the same geometric property of the Ricci flow from differently perspectives. While the bounded curvature condition enables the estimates for the entropy, through which the pseudolocality theorem was proved, the $\mathcal{L}$-complete assumption leads to a similar conclusion via the reduced geometry. In this way, we prove the following pseudolocality-type theorem for $\mathcal{L}$-complete Ricci flows.

	\begin{thm}\label{pseudo_locality}
		For any $\alpha>0$  there exist $\delta>0$ and $\epsilon_0>0$ depending on $\alpha$, and $n$ with the following property. Let  $(M^n,g_t)_{t\in [0,\epsilon^2]}$ be a smooth $\mathcal L$-complete Ricci flow on a noncompact manifold, where $\epsilon \in (0,\epsilon_0]$, such that 
		$$
		R(g_0)\ge -1,
		$$
		and
		$$
		\left(\operatorname{Area}_{g_0}(\partial \Omega)\right)^{n} \geq(1-\delta) n^n\omega_{n}\left(\operatorname{Vol}_{g_0}(\Omega)\right)^{n-1}
		$$
		for any regular domain $\Omega\subset M$. 
		Then we have
		$$
		|Rm|(x,t)\le \frac{\alpha}{t}\qquad\text{for all}\qquad (x,t)\in M\times(0,\epsilon^2].
		$$
		
	\end{thm}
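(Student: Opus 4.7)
The plan is to argue by contradiction and adapt Perelman's original pseudolocality argument to the $\mathcal L$-complete setting, with Perelman's reduced distance and reduced volume playing the role of the $\mu$-entropy in the bounded-curvature case. The $\mathcal L$-completeness hypothesis is precisely what ensures that the reduced geometry based at an interior spacetime point is well-defined and that the monotonicity of the reduced volume holds without a global curvature bound. Suppose the conclusion fails, so there exist sequences $\delta_i,\epsilon_i\downarrow 0$, $\mathcal L$-complete Ricci flows $(M_i^n,g_t^i)_{t\in[0,\epsilon_i^2]}$ satisfying the hypotheses, and violating points $(x_i,t_i)$ with $|Rm|(x_i,t_i)>\alpha/t_i$. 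A standard iterated Perelman-type point-picking then produces new ``bad'' points $(\bar x_i,\bar t_i)$ such that $Q_i:=|Rm|(\bar x_i,\bar t_i)$ satisfies $Q_i\bar t_i\to\infty$ and $|Rm|\le 4Q_i$ on a parabolic neighborhood whose radii, after the parabolic rescaling $\hat g^i_{\hat t}:=Q_ig^i_{\bar t_i+\hat t/Q_i}$, tend to infinity.

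Using the $\mathcal L$-completeness, I would define the reduced distance $\ell_i=\ell_{(\bar x_i,\bar t_i)}(\cdot,\cdot)$ and the reduced volume $\tilde V_i(\tau)$ for $\tau\in(0,\bar t_i]$. As in Perelman's standard setup one has $\lim_{\tau\to 0^+}\tilde V_i(\tau)=1$ and the monotonicity that $\tilde V_i(\tau)$ is non-increasing in $\tau$; the $\mathcal L$-completeness is what permits the integration by parts underlying this monotonicity in the absence of a global curvature bound. The central analytic input is then a lower bound $\tilde V_i(\bar t_i)\ge 1-\eta(\delta_i)$ with $\eta(\delta_i)\to 0$: the global almost-Euclidean isoperimetric inequality forces $(M_i,g_0^i)$ to be quantitatively close to Euclidean space, and this closeness, together with the integral defining $\tilde V_i(\bar t_i)$ in terms of $\ell_i$ on the initial slice, should yield that $\tilde V_i(\bar t_i)$ is close to the Euclidean value $1$.

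Granted such a lower bound, the monotonicity gives $\tilde V_i(\tau)\ge 1-\eta(\delta_i)$ for all $\tau\in(0,\bar t_i]$, hence $\kappa$-noncollapsing at $(\bar x_i,\bar t_i)$ at all scales up to $\sqrt{\bar t_i}$. Combining this noncollapsing with the bounded curvature on growing parabolic neighborhoods from point-picking, Hamilton's compactness theorem yields a pointed subsequential limit $(M_\infty^n,\hat g_{\hat t}^\infty,x_\infty)_{\hat t\le 0}$: a nonflat, $\kappa$-noncollapsed ancient Ricci flow with $|\widehat{Rm}|(x_\infty,0)=1$. Passing the reduced volume estimate to the limit gives $\tilde V_\infty\equiv 1$, and the rigidity in Perelman's reduced volume monotonicity then forces the limit to be the static flow on flat Euclidean space, contradicting $|\widehat{Rm}|(x_\infty,0)=1$.

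The main obstacle I anticipate is establishing the lower bound $\tilde V_i(\bar t_i)\ge 1-\eta(\delta_i)$ from the initial almost-Euclidean isoperimetric inequality. The base point $(\bar x_i,\bar t_i)$ lies in a high-curvature region of the flow, so one must quantitatively control how the relevant $\mathcal L$-geodesics propagate out of this region into the globally almost-Euclidean initial slice and show that $\ell_i(\cdot,\bar t_i)$ is close, in a suitable weighted $L^1$ sense, to the reduced distance on the standard static Euclidean flow. A closely related technical issue is justifying that the limiting reduced volume of the blow-up equals the limit of $\tilde V_i$; this will require uniform estimates on the $\mathcal L$-exponential maps along the sequence, for which the $\mathcal L$-completeness plus the curvature bounds from point-picking should suffice.
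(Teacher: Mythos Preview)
Your overall architecture matches the paper's: contradiction, point-picking to produce $(\bar x_i,\bar t_i)$ with $|Rm|\le 4Q_i$ on parabolic neighborhoods of rescaled radius $A_i\to\infty$, reduced volume based at $(\bar x_i,\bar t_i)$, a lower bound at the initial slice from the isoperimetric hypothesis, and a blow-up limit with reduced volume $\equiv 1$ forcing flatness. The compactness step and the passage of the reduced volume to the limit are handled essentially as you anticipate: the curvature bound from point-picking plus Shi's estimates control the $\mathcal L$-geodesic equation on the growing parabolic neighborhoods, so $\mathcal L$-geodesics with bounded initial velocity stay in balls of controlled radius, the integrand in the $\mathcal L$-exponential representation has uniform Gaussian tails, and the reduced volumes converge.

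The genuine gap is exactly where you place it, namely the lower bound $\tilde V_i(\bar t_i)\ge 1-\eta(\delta_i,\epsilon_i)$, and your proposed attack on it is pointed in the wrong direction. Tracking $\mathcal L$-geodesics from the high-curvature region down to the initial slice and comparing $\ell_i(\cdot,\bar t_i)$ to the Euclidean reduced distance will not work, because you have no control on the geometry between $t=0$ and $t=\bar t_i$ outside the point-picked parabolic neighborhood. The paper's argument takes place entirely on the initial slice and is analytic rather than geometric. The $\mathcal L$-completeness guarantees that Perelman's differential inequality
\[
2\Delta\ell-|\nabla\ell|^2+R+\frac{\ell-n}{\tau}\le 0
\]
holds in the distributional sense up to the initial time $\tau=\bar t_i$. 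Testing against $(4\pi\bar t_i)^{-n/2}e^{-\ell}$ (with a cutoff, after first showing $e^{-\ell/2}\in W^{1,2}$) and using $R(g_0)\ge -1$ gives
\[
\int_M\Big(\bar t_i|\nabla\ell|^2+\ell-n\Big)(4\pi\bar t_i)^{-n/2}e^{-\ell}\,d\mu_{g_0}\;\le\;\bar t_i\,\tilde V_i(\bar t_i).
\]
After rescaling so that $\bar t_i$ becomes $\tfrac12$ and setting $\varphi=(2\pi)^{-n/4}e^{-\ell/2}$, the left-hand side is precisely $\int(2|\nabla\varphi|^2-\varphi^2\log\varphi^2)\,d\mu-(n+\tfrac{n}{2}\log 2\pi)\int\varphi^2\,d\mu$. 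The global almost-Euclidean isoperimetric inequality on $(M,g_0)$ now enters only through the logarithmic Sobolev inequality with isoperimetric constant $(1-\delta_i)c_n$, yielding
\[
\tilde V_i(\bar t_i)=\int_M\varphi^2\,d\mu\;\ge\;e^{-\bar t_i}(1-\delta_i)\;\ge\;e^{-\epsilon_i^2}(1-\delta_i).
\]
This log-Sobolev step is the missing idea: no comparison of $\ell$ to a Euclidean model is needed, and no information about $\mathcal L$-geodesics between the bad point and the initial slice is used.
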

	
	Applying the above pseudolocality theorem, we can study the strong uniqueness of the $\mathcal L$-complete Ricci flow in higher dimensions. In particular,
 we show the strong uniqueness theorem for the $\mathcal L$-complete Ricci flow on Euclidean space in higher dimensions,  which partially answers a question proposed by  B-L.~Chen \cite{C}.
	
	\begin{thm}\label{uniqueness}
		Let  $(M^n,g_t)_{t\in[0,T]}$ be a smooth $\mathcal L$-complete Ricci flow on a noncompact manifold $M$ such that 
		$$
		|\mathrm{Rm}_{g_0}|\le C,
		$$
		and
		$$
		\left(\operatorname{Area}_{g_0}(\partial \Omega)\right)^{n} \geq(1-\delta) n^n\omega_{n}\left(\operatorname{Vol}_{g_0}(\Omega)\right)^{n-1}
		$$
		for any regular domain $\Omega\subset M$. 
		Then we have
		$$
		|\mathrm{Rm}_{g_t}|\le C\qquad\text{for all}\qquad t\in[0,T].
		$$
		Consequently, the solution $g_t$, $t\in[0,T]$, is uniquely determined by $g_0$.	In particular, two $\mathcal L$-complete Ricci flows starting from a Riemannian manifold satisfying the assumptions of the theorem must always be isometric to each other.
	\end{thm}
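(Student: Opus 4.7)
The plan is to use Theorem \ref{pseudo_locality} together with Hamilton's evolution inequality to establish that $|\mathrm{Rm}_{g_t}|$ remains uniformly bounded on $M\times[0,T]$, and then invoke the Chen--Zhu uniqueness theorem \cite{CZ} for Ricci flows of bounded curvature. First I would parabolically rescale by $\tilde g_t:=\lambda^2 g_{t/\lambda^2}$ with $\lambda$ large enough that $R_{\tilde g_0}\ge-n(n-1)C/\lambda^2\ge-1$; the isoperimetric inequality is scale-invariant and $\mathcal L$-completeness is preserved, so Theorem \ref{pseudo_locality} applies with any prescribed $\alpha>0$ and yields
\[
|\mathrm{Rm}_{\tilde g_t}|(x,t)\le \alpha/t \qquad\text{on}\qquad M\times(0,\epsilon^2],
\]
which unrescales to $|\mathrm{Rm}_{g_s}|\le \alpha/s$ on $(0,\epsilon^2/\lambda^2]$ in the original time parameter.

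Next I would produce a uniform bound up to $s=0$ by combining pseudolocality with the initial bound $|\mathrm{Rm}_{g_0}|\le C$. Hamilton's evolution inequality $\partial_s|\mathrm{Rm}|^2\le \Delta|\mathrm{Rm}|^2+c_n|\mathrm{Rm}|^3$, applied via the noncompact parabolic maximum principle (for which the $\alpha/s$ decay supplied by pseudolocality serves as the key a priori input at spatial infinity), yields a doubling-time bound $|\mathrm{Rm}_{g_s}|\le C(1+O(\alpha))$ on $[0,\tau]$ for some $\tau\sim 1/C$. Joined with pseudolocality on $[\tau,\epsilon^2/\lambda^2]$, this gives uniform curvature control on the first block $[0,\tau_1]$ with $\tau_1:=\epsilon^2/\lambda^2$. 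Taking $\alpha$ suitably small in terms of $n$ and $\epsilon$, the bound can be arranged so that $|\mathrm{Rm}_{g_{\tau_1}}|\le C$ is recovered at the right endpoint.

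To extend to all of $[0,T]$ I would iterate: at $t=\tau_1,2\tau_1,\ldots$ the metric at the left endpoint of each block satisfies the same bound $\le C$, so reapplying the rescale-then-pseudolocality procedure extends the uniform control one block at a time, covering $[0,T]$ in $\lceil T/\tau_1\rceil$ steps. With uniform curvature control on $M\times[0,T]$ in hand, the Chen--Zhu uniqueness theorem \cite{CZ} gives that $g_t$ is determined by $g_0$, and in particular any two $\mathcal L$-complete Ricci flows with the given initial data must be isometric.

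The principal obstacle is the iteration step: re-applying Theorem \ref{pseudo_locality} at time $k\tau_1$ requires verifying the almost-Euclidean isoperimetric inequality at the metric $g_{k\tau_1}$. Since these blocks carry uniformly bounded curvature and the initial data is very close to Euclidean, one expects this inequality to propagate along the flow with only mild degradation of $\delta$, via volume and area comparison or, as suggested by the localized formulation in \cite{W3}, by controlling the localized $\mu$-entropy along the flow. Making this propagation quantitative enough that the accumulated loss of $\delta$ over $\lceil T/\tau_1\rceil$ steps remains within the hypothesis of Theorem \ref{pseudo_locality} is the technical heart of the argument.
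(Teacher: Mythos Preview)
The paper's proof is a single sentence: Theorem~\ref{uniqueness} follows from Theorem~\ref{pseudo_locality} together with Theorem~3.1 in~\cite{C}. Chen's Theorem~3.1 is exactly the local curvature estimate your second paragraph is reaching for: from $|\mathrm{Rm}|(\cdot,0)\le K$ on a ball together with $|\mathrm{Rm}|(\cdot,t)\le A/t$ on that ball for positive $t$, it returns a uniform bound $|\mathrm{Rm}|\le C(n,A)K$ on a smaller ball. Your proposed substitute --- Hamilton's evolution inequality plus a noncompact maximum principle with the $\alpha/t$ bound ``as a priori input at spatial infinity'' --- does not work as stated: linearizing $\partial_t|\mathrm{Rm}|\le\Delta|\mathrm{Rm}|+c_n|\mathrm{Rm}|^2$ via $|\mathrm{Rm}|\le\alpha/t$ yields a zeroth-order coefficient of order $\alpha/t$, which is not integrable at $t=0$, so no global comparison function is available; and Chen's argument is in any event a purely local point-picking, not a maximum principle using behavior at spatial infinity. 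You should cite his result directly rather than sketching a reinvention of it.

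The paper does not iterate. Pseudolocality is applied once, at the initial time, and the isoperimetric hypothesis is never invoked at any $t>0$. Your block-by-block scheme and its self-declared ``principal obstacle'' --- quantitatively propagating the almost-Euclidean isoperimetric inequality to $g_{k\tau_1}$ so that Theorem~\ref{pseudo_locality} can be re-applied --- are therefore artifacts of the route you chose. This acknowledged gap, which you call the technical heart of the argument, simply does not arise once you follow the paper and combine a single application of Theorem~\ref{pseudo_locality} with Chen's Theorem~3.1.
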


	Combining Theorem \ref{conditions_l-complete}(1)(2) and Theorem \ref{uniqueness},  we obtain the following result.

	\begin{thm}\label{main0}
		Let $(\mathbb{R}^n,g_t)_{t \in[0, T]}$ be a complete and smooth solution to the Ricci flow with $g_0=g_E$, where $g_E$ is the standard Euclidean metric. Assume that
		\textbf{either} one of the following is true.
		\begin{enumerate}[(1)]
			\item there exists a positive constant $c$, such that 
			\begin{align*}
				g_t\ge c\bar g\quad\text{ for all }\quad t\in[0,T],
			\end{align*}
			where $\bar g$ is a complete smooth Riemannian metric on $\mathbb{R}^n$
			\item  there exist constants $\alpha>0$, $\beta>0$, $\gamma\in(0,1)$, and a fixed point $x_0\in\mathbb{R}^n$, such that
			\begin{align*}
				\mathrm{Ric}_{g_t}(x)\geq -\alpha t^{-\gamma}\cdot\ln\left(\dist_{g_t}(x_0,x)+\beta\right)g_t\quad\text{ for all }\quad (x,t)\in \mathbb{R}^n\times [0,T].
			\end{align*}	
		\end{enumerate}		
		Then we have $g_t\equiv g_E$ for all $t\in[0,T]$.
	\end{thm}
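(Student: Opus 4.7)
The plan is to combine Theorem \ref{conditions_l-complete} with Theorem \ref{uniqueness}, after verifying their remaining hypotheses for $(\mathbb{R}^n, g_t)$, and then to conclude by comparing the resulting uniquely determined flow with the stationary Euclidean flow $\tilde g_t \equiv g_E$.

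To apply Theorem \ref{conditions_l-complete}, the piece requiring real work is the scalar curvature lower bound $R_{g_t} \ge -k$, which is assumed in both (1) and (2) of that theorem but is not among the hypotheses of Theorem \ref{main0}. Since $R_{g_0} \equiv 0$ on Euclidean space and the scalar curvature evolves by $\partial_t R = \Delta R + 2|\mathrm{Ric}|^2 \ge \Delta R$, one expects $R_{g_t} \ge 0$ by a maximum principle. The difficulty is that the curvature may be unbounded for $t>0$, so I would run a localized or barrier-type maximum principle: in case (1), the uniform comparison $g_t \ge c\bar g$ gives compactness of sublevel sets of the $\bar g$-distance, enough to implement an Omori--Yau or cutoff argument; in case (2), the logarithmic Ricci lower bound controls the distortion of distances in both space and time and should permit a parallel argument. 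With $R_{g_t} \ge 0$ in hand, the remaining hypothesis of Theorem \ref{conditions_l-complete} is precisely what is assumed on $g_t$ in Theorem \ref{main0}, so $\mathcal{L}$-completeness of $(\mathbb{R}^n,g_t)_{t\in[0,T]}$ follows (after shrinking $T$ if necessary in case (2)).

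The hypotheses of Theorem \ref{uniqueness} are then immediate: $|\mathrm{Rm}_{g_0}| \equiv 0$, so any $C>0$ serves as an initial curvature bound; and the classical sharp Euclidean isoperimetric inequality
$$\left(\operatorname{Area}_{g_E}(\partial\Omega)\right)^n \ge n^n \omega_n \left(\operatorname{Vol}_{g_E}(\Omega)\right)^{n-1}$$
for every regular $\Omega\subset\mathbb{R}^n$ trivially implies the almost-Euclidean version for every $\delta\in(0,1)$. Theorem \ref{uniqueness} therefore asserts that $g_t$ is uniquely determined, among $\mathcal{L}$-complete smooth Ricci flows on $\mathbb{R}^n$ starting from $g_E$, by its initial data.

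To finish, I observe that the stationary flow $\tilde g_t \equiv g_E$ is itself such a solution: it is smooth, it satisfies the Ricci flow equation trivially because $g_E$ is flat, and it is $\mathcal{L}$-complete because on a flat stationary background the minimal $\mathcal{L}$-geodesics reduce, after reparametrization, to Euclidean straight segments and hence stay within any convex neighborhood of their endpoints. The uniqueness assertion then forces $g_t = \tilde g_t \equiv g_E$ on $[0,T]$. The main obstacle is the scalar curvature lower bound needed to enter Theorem \ref{conditions_l-complete}; once that step is completed, the conclusion is a direct concatenation of the preceding results.
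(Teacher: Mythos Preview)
Your overall architecture matches the paper's: first invoke Theorem \ref{conditions_l-complete} to obtain $\mathcal{L}$-completeness, then feed this into Theorem \ref{uniqueness}, then compare with the static Euclidean flow. The paper phrases the last step as ``bounded curvature, hence Chen--Zhu \cite{CL} applies,'' while you invoke the uniqueness clause of Theorem \ref{uniqueness} directly after checking that $\tilde g_t\equiv g_E$ is itself $\mathcal{L}$-complete; these are equivalent, since the uniqueness clause of Theorem \ref{uniqueness} is itself proved via bounded curvature and \cite{C,CL}.

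The one place your proposal is weaker than the paper is the scalar curvature lower bound, which you correctly flag as the only nontrivial hypothesis to verify before entering Theorem \ref{conditions_l-complete}. Your plan is to run a localized Omori--Yau or barrier maximum principle for $\partial_t R\ge \Delta R$ using the extra structure in (1) or (2). This is plausible but delicate in the possibly-unbounded-curvature regime, and you leave it as a sketch. There is a much cleaner argument that avoids any maximum principle on the unknown flow: since $g_0=g_E$ is Ricci-flat, the flow extends smoothly to an \emph{ancient} solution by setting $g_t\equiv g_E$ for $t\le 0$ (all time-derivatives of $g_t$ at $t=0$ match those of the static flow because they are curvature expressions which vanish for $g_E$). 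Chen's theorem \cite{C} on complete ancient Ricci flows then gives $R_{g_t}\ge 0$ for all $t\in[0,T]$ with no further hypotheses. This is what the paper has in mind, and it closes the gap you identified without any case analysis.

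One small point you only hint at: in case (2), Theorem \ref{conditions_l-complete} requires $T\le\overline T(\alpha,\gamma)$. The paper handles general $T$ by iterating on subintervals of length at most $\overline T$: once $g_t\equiv g_E$ on $[0,\overline T]$, one restarts from $g_{\overline T}=g_E$, and the Ricci bound in (2) on $[\overline T,2\overline T]$ is at least as strong under the time-shift since $t^{-\gamma}\le (t-\overline T)^{-\gamma}$.
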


	\textbf{Acknowledgement.} The  authors would like to thank Professor Jiaping Wang for some helpful suggestions.

	\section{Preliminaries}
	
	In this section, we collect some well-known results applied in the proofs of our main theorems. These results are chiefly about Perelman's reduced distance and reduced volume; most of them can be found in \cite{P1,Y1}. 
	Let $(M^n,g(\tau))_{\tau\in[0,T]}$ be a solution to the backward Ricci flow equation
	\begin{align*}
		\frac{\partial }{\partial \tau}g(\tau)=2\mathrm{Ric}_{g(\tau)},
	\end{align*}
	where $\tau$ stands for the backward time. In fact, if $(M^n,g_t)_{t\in[0,T]}$ is a Ricci flow, then $g(\tau):=g_{T-\tau}$ solves the backward Ricci flow equation. In the remaining part of the paper, since we often shift between Ricci flow and backward Ricci flow, we remind the reader that if the ``time variable'' is a subindex, such as $g_t$, then the notation stands for a Ricci flow; if the ``time variable'' is placed in a pair of parentheses, such as $g(\tau)$, then the notation represents a backward Ricci flow.
	
	Let $\gamma(s):[0,\tau]\to M$, where $\tau\in(0, T]$, be a piecewise $C^1$ curve, then Perelman's $\mathcal L$-energy is defined as
	\begin{align}\label{l-energy} \mathcal{L}(\gamma)=\int^{\tau}_0
		\sqrt{s}\left(R_{g(s)}(\gamma(s))+|\gamma'(s)|_{g(s)}^2\right)ds.
	\end{align}	
	One may view $\gamma$ as a curve in the (backward) Ricci flow space-time from $(\gamma(0),0)$ to $(\gamma(\tau),\tau)\in M\times[0,T]$, satisfying $(\gamma(s),s)\in M\times\{s\}$ for all $s\in[0,\tau]$. 
	Let $x_0\in M$ be a fixed base point. For any $(x,\tau)\in M\times(0,T]$, we define
	\begin{align}\label{reduced_l} 
		L(x,\tau)=\inf\limits_{\gamma} \mathcal{L}(\gamma(s)),\quad \ell(x,\tau)=\frac{L(x,\tau)}{2\sqrt{\tau}},
	\end{align}
	where the infimum is taken over all  piecewise $C^1$
	curves $\gamma(s):[0,\tau]\to M$ satisfying $\gamma(0)=x_0$ and
	$\gamma(\tau)=x$. A minimizer of $L$ (should it exist) is called a \emph{minimal $\mathcal{L}$-geodesic}, and $\ell(\cdot,\cdot): M\times(0,T]\to \R$ is called Perelman's reduced distance function or the $\ell$-function based at $(x_0,0)$.  
	
	\begin{rem}
		When considering a forward Ricci flow $(M,g_t)_{t\in[0,T]}$, then a minimal $\mathcal{L}$-geodesic is said to connect $(x,t)$ and $(y,s)$, where $s<t$, if it minimizes \eqref{l-energy} with respect to the backward Ricci flow $(M,g(\tau):=g_{t-\tau})_{\tau\in[0,t]}$, starts at $(x,0)$, and ends at $(y,t-s)$.
	\end{rem}
	
	For $v\in T_{x_0}M,$ let $\gamma_v$ denote the $\mathcal{L}$-geodesic satisfying $\lim\limits_{s\to 0}\sqrt{s}\gamma'(s) = v$. If $\gamma_v$ exists on $[0,\tau]$, then the $\mathcal{L}$-exponential map $\mathcal{L}\text{exp}^{\tau}_{x_0}:T_{x_0}M\to M$ at time $\tau$ is defined as
	$$\mathcal{L}\text{exp}^{\tau}_{x_0}(v)=\gamma_v(\tau).$$
	Let $U(\tau)\subset T_{x_0}M\cong\mathbb{R}^n$ be the maximal domain of $\mathcal{L}\text{exp}^{\tau}_{x_0}$.  By applying the basic ODE theory to the $\mathcal{L}$-geodesic equation---a linear ODE depending only on the geometry near the $\mathcal{L}$-geodesic---one may conclude that $U(\tau)$ is an open set and that $\mathcal{L}\text{exp}^{\tau}_{x_0}$ is a smooth map from $U(\tau)$ to $M$. The injectivity domain at time $\tau$ is defined as
	\begin{align*}
		&&\Omega^{T_{x_0}M}(\tau)=\Big\{ v\in U(\tau)\ \Big\vert \ \gamma_v|_{[0,\tau]} :[0,\tau] \to M \text{ is the unique minimal } \mathcal{L}\text{-geodesic}\\
		&&
		\text{\ from $(x_0,0)$ to $(\gamma_v(\tau),\tau)$ }; \text{$\gamma_v(\tau) $ is not conjugate to }x_0 \text{ along\ } \gamma_v.\Big\}.
	\end{align*}
	It is easily seen that
	\begin{align}\label{Injective domain}
		\Omega^{T_{x_0}M}(\tau_2)\subset \Omega^{T_{x_0}M}(\tau_1)\qquad \text{ if }\quad \tau_1\le\tau_2.
	\end{align}
	Correspondingly we also define
	\begin{align*}
		&&\Omega(\tau)=\Big\{q\in M\ \Big\vert\  \text{There is a unique minimal } \mathcal{L}-\text{geodesic }
		\gamma:[0,\tau] \to M
		\text{\ with\ }\\&& \gamma(0) = x_0, \gamma(\tau) = q ; q\text{ is not conjugate to } x_0\text{ along\ } \gamma.\Big\}.
	\end{align*}
	Obvisously,
	$$
	\Omega(\tau)=\mathcal{L}\text{exp}^{\tau}_{x_0}\left(\Omega^{T_{x_0}M}(\tau)\right),
	$$
	and Perelman's reduced volume based at $(x_0,0)$ is defined as
	\begin{align}\label{RV}
		\mathcal {V}(\tau)=\int_{\Omega(\tau)} (4\pi \tau)^{-\frac{n}{2}}
		e^{-\ell(\cdot,\tau)}
		d\mu(\tau).
	\end{align}
	The $\mathcal{L}$-cut-locus
	is defined as
	$$
	C(\tau)=M\backslash \Omega(\tau).
	$$
	
	Let
	$J_i^V(\tau), i=1,\cdots,n,$ be $\mathcal{L}$-Jacobi fields along
	$\gamma_V(\tau)$ with $J_i^V(0)=0, (\nabla_V J_i^V)(0)=E^0_i,$ where
	$\{E^0_i\}^n_{i=1}$ is an orthonormal basis on $T_{x_0}M$ with respect
	to $g(0)$. Then $D(\mathcal{L} exp^\tau)\big|_V(E^0_i)=J^V_i(\tau)$,
	and the reduced volume can also be computed as
	\begin{align}\label{WFRV}
		\mathcal {V}(\tau)=\int_{	\Omega^{T_{x_0}M}(\tau)} (4\pi \tau)^{-\frac{n}{2}}
		e^{-l(\gamma_V(\tau),\tau)}
		\mathcal{L}J_V(\tau)dx_{g(0)}(V),
	\end{align}
	where
	$\mathcal{L}J_V(\tau)=\sqrt{det\left(\left<J^V_i(\tau),J^V_j(\tau)\right>\right)}$
	and $dx_{g(0)}$ is the standard Euclidean volume form on
	$(T_{x_0}M,g(x_0,0))$.

	To end this section, we recall the famous logarithmic Sobolev inequality due to Gross \cite{G}. This inequality is important in the proofs of our main theorems, since it is strongly related to Perelman's monotonicity formulas. The particular form of the following theorem can be found in \cite[Theorem 22.5]{RFV3}.
	
	\begin{thm}[Logarithmic Sobolev inequality]\label{logsob}
		Let $(M^n,g)$ be a complete Riemannian manifold which satisfies
		$$
		\left(\operatorname{Area}_{g}(\partial \Omega)\right)^{n} \geq I_n\left(\operatorname{Vol}_{g}(\Omega)\right)^{n-1}
		$$
		for any regular compact domain $\Omega\subset M^n$.
		Then for any $W^{1,2}$ function $\varphi$ on $M^n$, we have
		\begin{align*}
			\int_{M^n}\left(2|\nabla\varphi|^2-\varphi^2\log\varphi^2\right)d\mu+\log\left(\int_{M^n}\varphi^2 d\mu\right)\int_{M^n}\varphi^2 d\mu\geq \left(\frac{n}{2}\log(2\pi)+n+\log\left(\frac{I_n}{c_n}\right)\right)\int_{M^n}\varphi^2 d\mu,
		\end{align*}
		where $c_n=n^n\omega_n$ is
		the isoperimetric constant of Euclidean space.
	\end{thm}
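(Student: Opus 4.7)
The plan is to deduce the logarithmic Sobolev inequality from the isoperimetric hypothesis in two stages: first, upgrade the isoperimetric inequality to a sharp $L^2$ Sobolev inequality via a coarea argument, then extract Gross's log-Sobolev form from it by differentiating a one-parameter family of $L^p$ inequalities at $p=2$. Since the hypothesized inequality reduces to the Euclidean one when $I_n = c_n$, the overall strategy is a direct translation of the classical proof of the sharp Euclidean log-Sobolev inequality, with the isoperimetric constant appearing linearly through its logarithm.

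For the first stage, I would apply the coarea formula to a nonnegative compactly supported test function $u$ to get
$$\int_M |\nabla u|\, d\mu = \int_0^\infty \operatorname{Area}_g(\{u = s\})\, ds \geq I_n^{1/n} \int_0^\infty \operatorname{Vol}_g(\{u > s\})^{(n-1)/n}\, ds \geq I_n^{1/n} \|u\|_{n/(n-1)},$$
where the last step is the layer-cake representation combined with a Minkowski-type bound on the distribution function. Substituting $u = |\varphi|^{2(n-1)/(n-2)}$ and applying H\"older's inequality promotes this to a sharp $L^2$ Sobolev inequality of the form $A_n(I_n)\,\|\varphi\|_{2n/(n-2)}^2 \leq \int_M |\nabla\varphi|^2\, d\mu$, with $A_n(I_n)$ proportional to $I_n^{2/n}$ and equal to the Aubin--Talenti constant when $I_n = c_n$.

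For the second stage, I would differentiate the $L^p$ norm at $p=2$. Setting $G(p) = \log \int \varphi^p\, d\mu$ one computes $G'(2) = \left(\int \varphi^2\right)^{-1} \int \varphi^2 \log \varphi$, which after rescaling is exactly the entropy expression on the left-hand side of the claim. Convexity of $p \mapsto G(p)$ (Jensen applied to moments) gives $G(q) - G(2) \geq (q-2) G'(2)$, and combining with the sharp Sobolev inequality at $q = 2n/(n-2)$ produces an upper bound on the entropy in terms of $\int |\nabla\varphi|^2\, d\mu$. Rearranging yields the claimed inequality, with the correction $\log(I_n/c_n)$ arising linearly from the factor $I_n^{2/n}$ in $A_n(I_n)$ because $(n/2)\log A_n(I_n)$ contributes $\log I_n$ up to the Euclidean baseline.

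The main obstacle is tracking sharp constants. Recovering precisely $\tfrac{n}{2}\log(2\pi) + n$ on the right-hand side in the Euclidean case $I_n = c_n$ forces one to use the sharp Aubin--Talenti Sobolev constant (itself produced by the sharp isoperimetric constant in Stage 1) and the sharp interpolation bound in Stage 2; this is the Beckner--Pelissier derivation of the sharp Euclidean log-Sobolev inequality. An alternative route via the Bakry--\'Emery semigroup method would work as well, replacing the $L^p$ interpolation by entropy dissipation for an adapted diffusion with Gaussian invariant measure.
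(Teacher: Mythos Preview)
The paper does not prove this theorem; it is quoted from \cite[Theorem 22.5]{RFV3} and applied as a black box. So there is no paper proof to compare against, and the question is only whether your outline stands on its own.

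Your two-stage route is natural and produces the correct $\log(I_n/c_n)$ dependence, but the specific steps you describe lose the sharp additive constant $\tfrac{n}{2}\log(2\pi)+n$ at both stages. In Stage~1, substituting $u=|\varphi|^{2(n-1)/(n-2)}$ into the sharp $L^1$ Sobolev inequality and applying H\"older yields only $\|\varphi\|_{2^*}\le I_n^{-1/n}\,\tfrac{2(n-1)}{n-2}\,\|\nabla\varphi\|_2$, which even for $I_n=c_n$ is strictly weaker than Aubin--Talenti; recovering the sharp $L^2$ constant from isoperimetry requires the full P\'olya--Szeg\H{o} rearrangement, not just a substitution. In Stage~2, even granting the sharp Sobolev constant $K_S$, the convexity bound $G(q)-G(2)\ge(q-2)G'(2)$ at $q=2^*$ gives $\int\varphi^2\log\varphi^2\le\tfrac{n}{2}\log\bigl(K_S^2\int|\nabla\varphi|^2\bigr)$, and one checks $K_S^2>\tfrac{2}{\pi e n}$ strictly for every finite $n$ (the Sobolev extremals are Talenti bubbles, not Gaussians, so the convexity step cannot be an equality there). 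You flag the constant issue yourself, but the repair is not bookkeeping: the argument as written delivers a strictly worse constant, and one genuinely needs a different mechanism---symmetrization applied directly to the entropy functional, the Del~Pino--Dolbeault sharp Gagliardo--Nirenberg family taken to its endpoint, or the Bakry--\'Emery route you mention at the end---to reach the stated constant.
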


	\section{Perelman's $\mathcal{L}$-geometry on $\mathcal L$-complete Ricci flow}
	
	In this section,  we shall first show that Perelman's theory of $\mathcal L$-geometry can be extended to the case of $\mathcal L$-complete Ricci flows, and then show that the Ricci flow is $\mathcal L$-complete under
	the assumptions made in Theorem \ref{conditions_l-complete}.

	\subsection{$\mathcal L$-complete Ricci flow}
	In Perelman's study \cite{P1} of the $\mathcal L$-geometry, a general (although implicit) assumption is bounded sectional curvature. However, Ye \cite{Y1} studied the properties of the $\ell$-function and the reduced volume assuming only a lower bound of the Ricci curvature. 	With a little more observation,  Perelman's theory of $\mathcal L$-geometry can be extended to $\mathcal L$-complete Ricci flows. 
	We will leave it to the reader to check that most proofs in \cite{Y1} are valid, and we summarize some important results below.
	
	\begin{thm}[\cite{P1}, see also {\cite[Proposition 2.14, Lemma 2.22, Theorem 2.23]{Y1}}]\label{Perelman}
		Let $(M^n,g(\tau))_{\tau\in[0,T]}$ be a smooth $\mathcal L$-complete backward Ricci flow. Let $ \ell$ be the reduced distance function based at $(x_0,0)$. Then $\ell$ is locally Lipschitz on $M\times(0,T]$ and the following equation and inequalities hold almost everywhere in the smooth sense on $ M\times(0,T]$.
		\begin{align}
			&2\frac{\partial \ell}{\partial \tau}+|\nabla l|^2-R+\frac{\ell}{\tau}=0,\label{eq_l_1}\\
			&	\frac{\partial }{\partial \tau}\ell-\Delta \ell +	|\nabla \ell|^2-R+\frac{n}{2\tau} \geq 0,\label{eq_l_4}\\
			&	2\Delta \ell-|\nabla \ell|^2+R+\frac{\ell-n}{\tau} \le 0.\label{eq_l_5}
		\end{align}
		Furthermore, (\ref{eq_l_4}) and (\ref{eq_l_5}) both hold on $M\times (0,T]$ in the sense of distribution. That is to say, for any $0<\tau_1<\tau_2\le T$ and for any nonnegative Lipshcitz function $\phi$ compactly supported on $M\times [\tau_1,\tau_2]$, it holds that
		\begin{eqnarray}\label{eq_l_6}
			\int_{\tau_1}^{\tau_2}\int_M\left(\nabla \ell\cdot\nabla\phi+\left(\frac{\partial}{\partial\tau}\ell+|\nabla \ell|^2-R+\frac{n}{2\tau}\right)\phi\right)dg(\tau)d\tau\geq0,
		\end{eqnarray}
		and, for any $\tau\in(0,T]$ and any nonnegative Lipshcitz function $\phi$ compactly supported on $M$, it holds that
		\begin{eqnarray}\label{eq_l_7}
			\int_M\left(-2\nabla \ell\cdot\nabla\phi+\left(-|\nabla \ell|^2+R+\frac{\ell-n}{\tau}\right)\phi\right)dg(\tau)\leq 0,
		\end{eqnarray}
		Moreover, the integrand in (\ref{WFRV}) is pointwise monotonically non-increasing, namely,
		\begin{eqnarray}\label{eq_l_8}
			\frac{d}{d \tau}\left((4\pi \tau)^{-\frac{n}{2}}
			e^{-l(\gamma_V(\tau),\tau)}
			\mathcal{L}J_V(\tau)\right)\leq 0,
		\end{eqnarray}
		for any $V\in 	\Omega^{T_{x_0}M}(\tau)$.
	\end{thm}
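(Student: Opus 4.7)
My plan is to follow Ye's strategy in \cite{Y1}, observing that the only place where the Ricci lower bound is essentially used is to guarantee (a) existence of a minimal $\mathcal{L}$-geodesic between any two space-time points and (b) that all such minimizers stay within a compact subset depending on the endpoints. Both of these are exactly what Definition \ref{def:l-complete} provides, so the bulk of the argument consists in verifying that each step of Ye's proof can be recast using the compact set $F$ supplied by $\mathcal{L}$-completeness in place of the geodesic balls obtained from Bishop--Gromov.

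First I would establish that $\ell$ is locally Lipschitz on $M\times(0,T]$. An upper bound $\ell(x,\tau)\le C$ on a compact neighborhood follows by plugging a convenient smooth test curve between $x_0$ and $x$ into \eqref{l-energy}; the lower bound and the Lipschitz estimate follow from the existence of a minimizer together with the fact that all minimizing curves lie in a fixed compact set $F$, so the metric, curvature, and their derivatives are uniformly bounded along them. With this in hand the pointwise identity \eqref{eq_l_1}, at points of differentiability of $\ell$, is the standard first-variation formula for $\mathcal{L}$-length along a minimal $\mathcal{L}$-geodesic. Similarly, the a.e. inequality \eqref{eq_l_4} (equivalently \eqref{eq_l_5}) is obtained by tracing the second variation formula against an orthonormal parallel frame along a minimizer, exactly as in Perelman, with all curvature expressions uniformly controlled thanks to $\mathcal{L}$-completeness.

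The technical heart of the proof is upgrading these pointwise inequalities to the distributional statements \eqref{eq_l_6} and \eqref{eq_l_7}; this is where the $\mathcal{L}$-cut locus $C(\tau)$ intervenes, since $\nabla\ell$ can jump across it. The classical trick is to construct, at a given cut point $q$ reached by two distinct minimizers, a smooth upper barrier whose gradient coincides with that of $\ell$ along one chosen geodesic, and to verify that the sign of the jump of $\nabla\ell$ across $C(\tau)$ is compatible with the desired inequality after integration by parts. The $\mathcal{L}$-complete hypothesis enters here because the exhaustion of $M$ by compact sets must be accompanied by uniform control of every minimizer from $x_0$ into that compact set; this is precisely Definition \ref{def:l-complete}. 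The pointwise monotonicity \eqref{eq_l_8} along an individual $\mathcal{L}$-geodesic $\gamma_V$ is a purely ODE-theoretic computation involving $\mathcal{L}$-Jacobi fields along $\gamma_V$, and is insensitive to cut-locus issues: one simply uses that $\mathcal{L}\exp^\tau_{x_0}$ is smooth on the open set $U(\tau)$, so all quantities in \eqref{eq_l_8} are smooth functions of $\tau$ along a fixed $\gamma_V$.

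The step I expect to be the main obstacle is the distributional inequality \eqref{eq_l_7}, because establishing it globally on a noncompact manifold without curvature bounds requires taming the behavior of $\nabla\ell$ near $C(\tau)$ simultaneously with the behavior at spatial infinity. Ye \cite{Y1} circumvents the infinity issue by using the Ricci lower bound to build good cutoff functions via distance comparison. In our setting each such cutoff is to be supported in the compact set $F$ given by Definition \ref{def:l-complete} for the relevant pair $(E,[\tau_1,\tau_2])$; the remaining task is to check that the boundary contributions on $\partial F$ vanish in the limit as $F$ exhausts $M$, using the reduced-volume monotonicity \eqref{eq_l_8} together with the uniform lower bound $\ell\ge -C$ already produced in the Lipschitz step.
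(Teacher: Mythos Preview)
Your approach is essentially the paper's: both reduce the theorem to Ye's arguments in \cite{Y1}, observing that the Ricci lower bound there is used only to guarantee existence of minimal $\mathcal{L}$-geodesics and confinement of minimizers to compact sets, which is precisely what Definition~\ref{def:l-complete} supplies. The paper's proof is in fact a two-sentence reference to \cite{Y1}, so your more detailed sketch of the Lipschitz, first/second variation, and cut-locus steps is faithful to what that reference actually does.

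Two small corrections. First, your final paragraph misidentifies the obstacle: the test functions $\phi$ in \eqref{eq_l_6} and \eqref{eq_l_7} are \emph{compactly supported}, so there is no behavior at spatial infinity to tame, and no exhaustion $F\nearrow M$ is needed. One fixes $\phi$, takes $E=\{x_0\}\cup\operatorname{supp}\phi$, and Definition~\ref{def:l-complete} delivers a single compact $F$ containing every relevant minimizer; the entire argument then takes place inside $F$. The cutoffs in Ye's proof are there to handle the cut locus $C(\tau)$, not infinity. Second, you omit a point the paper explicitly addresses: Ye states \eqref{eq_l_7} and \eqref{eq_l_8} only on $M\times(0,T)$ because he invokes Shi's estimates for local curvature derivative bounds, which are unavailable at the initial slice. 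Here the flow is assumed smooth on $M\times[0,T]$, so curvature derivatives are locally bounded up to $\tau=T$ by hypothesis, and the conclusions extend to $(0,T]$ without further work.
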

	\begin{proof}
		By virtue of our definition of the $\mathcal{L}$-complete Ricci flow (Definition \ref{def:l-complete}), when analyzing the local properties of the $\mathcal{L}$-geodesics or the reduced distance, only local geometry is involved. Thus the proofs in \cite{Y1} can be wholly adopted in our case.
		
		We emphasize that although according to Ye's formulation, \eqref{eq_l_7} and \eqref{eq_l_8} hold only on $M\times (0,T)$, yet this restriction stems from the application of Shi's estimates when obtaining the local curvature derivative bounds. However, in our assumption, the backward Ricci flow is assumed to be smooth on $M\times[0,T]$, namely, each curvature derivative is bounded locally. Thus  \eqref{eq_l_7} and \eqref{eq_l_8} are also valid on $M\times(0,T]$.
	\end{proof}

	\begin{thm}[\cite{P1}, see also {\cite[Theorem 4.3, Theorem 4.5]{Y1}}]\label{Monotonicity}
		Let $(M^n,g(\tau))_{\tau\in[0,T]}$ be a $\mathcal L$-complete backward Ricci flow.
		Let $\mathcal {V}(\tau)$ be the reduced volume based at $(x_0,0)$. Under the same assumptions as in the above theorem, we have
		\begin{enumerate}
			\item $0<\mathcal {V}(\tau)\leq 1$ for all $\tau\in(0,T]$;
			\item $\mathcal {V}(\tau)$ is non-increasing in $\tau$;
			\item if $\mathcal V(\tau)=1$ for some $\tau\in (0,T]$, then $M^n=\mathbb{R}^n$ and $g(s)\equiv g_E$ for all $s\in[0,\tau]$, where $g_E$ is the standard Euclidean metric.
		\end{enumerate}
	\end{thm}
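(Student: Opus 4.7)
The plan is to follow Perelman's original strategy for the reduced volume monotonicity, adapted to the $\mathcal L$-complete setting exactly as the statement of Theorem \ref{Perelman} was adapted. The three conclusions are proven in order.

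First I would use the representation \eqref{WFRV} of $\mathcal V(\tau)$ as an integral over the tangent space $T_{x_0}M$. By \eqref{Injective domain}, the domain $\Omega^{T_{x_0}M}(\tau)$ is monotonically non-increasing in $\tau$, and by \eqref{eq_l_8} of Theorem \ref{Perelman} the integrand $(4\pi\tau)^{-n/2}e^{-\ell(\gamma_V(\tau),\tau)}\mathcal L J_V(\tau)$ is pointwise non-increasing in $\tau$ on its domain of definition. Combining these gives conclusion (2) immediately, and $\mathcal V(\tau) > 0$ is clear because the integrand is strictly positive on the open, non-empty set $\Omega^{T_{x_0}M}(\tau)$.

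Next, to obtain the upper bound $\mathcal V(\tau)\le 1$, I would show that $\lim_{\tau\to 0^+}\mathcal V(\tau)=1$ and then appeal to (2). For this limit one expands $\mathcal L$-geodesics near $\tau=0$ via the substitution $V=\sqrt\tau\,\gamma'(\tau)$ at infinitesimal $\tau$: because the Ricci flow is smooth on $M\times[0,T]$, curvature and its derivatives are locally bounded near $(x_0,0)$, so a short time shrinking estimate for $\mathcal L$-geodesics gives $\ell(\gamma_V(\tau),\tau)\to |V|^2$ and $\mathcal L J_V(\tau)\to (2\sqrt\tau\,)^n$-equivalent Euclidean Jacobian after the standard change of variable $W=V/\sqrt{\tau\phantom|}$ scaled appropriately. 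The resulting integral converges to the Gaussian integral $\int_{\mathbb R^n}(4\pi)^{-n/2}e^{-|W|^2}dW=1$. The $\mathcal L$-complete assumption is not used here at all; only smoothness near $\tau=0$ and dominated convergence (the dominating function being the Gaussian, guaranteed by the pointwise monotonicity \eqref{eq_l_8}, which lets us bound the integrand at small $\tau$ by its Euclidean limit plus an $o(1)$ error on compact sets in the $V$-variable).

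Finally, for the rigidity statement (3), suppose $\mathcal V(\tau_*)=1$ for some $\tau_*\in(0,T]$. By (2), $\mathcal V(\tau)\equiv 1$ on $(0,\tau_*]$, which forces equality in \eqref{eq_l_8} everywhere along every $\mathcal L$-geodesic, for every $\tau\in(0,\tau_*]$. Tracing this back through the derivation in Theorem \ref{Perelman}, the equality case of the pointwise monotonicity is equivalent to equality in \eqref{eq_l_4} and \eqref{eq_l_5}, which after standard manipulation (adding and subtracting, applying the Bochner formula to $\ell$) yields the gradient shrinking soliton identity
\begin{equation*}
\mathrm{Ric}_{g(\tau)}+\nabla^2\ell-\frac{1}{2\tau}g(\tau)=0\quad\text{on }M\times(0,\tau_*].
\end{equation*}
Combined with $\mathcal V\equiv 1$, this forces the soliton to be the Gaussian shrinking soliton on Euclidean space: at any chosen slice, $\ell$ attains a non-degenerate minimum with Hessian $\frac{1}{2\tau}g$, and by using \eqref{eq_l_1} together with the soliton equation to identify $R+|\nabla\ell|^2=\ell/\tau$ and hence $R\equiv 0$, one recovers the flat metric. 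Completeness via $\mathcal L$-completeness plus flatness then gives $M\cong\mathbb R^n$ and $g(s)\equiv g_E$ on $[0,\tau_*]$.

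I expect the main obstacle to be the $\tau\to 0^+$ limit: without a uniform curvature bound, justifying dominated convergence for \eqref{WFRV} must be done carefully, using local boundedness of curvature derivatives from smoothness at $\tau=0$ together with the pointwise monotonicity \eqref{eq_l_8} to control the integrand by its Euclidean limit on each compact set in $T_{x_0}M$; the tail at $|V|\to\infty$ is again dominated via \eqref{eq_l_8} applied to arbitrarily small $\tau$. The rigidity argument is delicate but is the familiar Perelman argument once the soliton equation is extracted, and nothing in it uses bounded curvature beyond what the $\mathcal L$-complete assumption already provides via Theorem \ref{Perelman}.
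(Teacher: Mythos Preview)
Your proposal is correct and follows essentially the same approach as the paper, which simply cites \eqref{Injective domain} and \eqref{eq_l_8} (and refers to \cite{Y1} for details). One small simplification: for $\mathcal V(\tau)\le 1$ you do not need to prove $\lim_{\tau\to 0^+}\mathcal V(\tau)=1$ via dominated convergence; since \eqref{eq_l_8} already forces the integrand to be bounded above by its $\tau\to 0$ limit $(4\pi)^{-n/2}e^{-|V|^2/4}$ (the formula the paper records in the proof of Lemma~\ref{reduced_volume}), integrating this pointwise bound over $\Omega^{T_{x_0}M}(\tau)\subset T_{x_0}M$ gives $\mathcal V(\tau)\le 1$ directly.
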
	
	\begin{proof}
		According to the definition of the reduced volume \eqref{WFRV}, this theorem is a consequence of \eqref{Injective domain} and \eqref{eq_l_8}. The details can be found in \cite{Y1}.
	\end{proof}

	\subsection{Sufficient conditions of $\mathcal{L}$-completeness}
	In this subsection, we shall prove Theorem \ref{conditions_l-complete}. To begin with, we prove the following straightforward auxilliary lemma.
	
	\begin{lem}[bounded $\mathcal{L}$-energy in compact set]\label{L-bounded in compact sets}
		Let $(M,g(\tau))_{\tau\in[0,T]}$ be a complete backward Ricci flow. For any $\bar\tau\in(0,T]$ and any compact set $E\in M$, there exists a positive number $L$, such that for any $x_0$, $x\in E$, there is a picewise $C^1$ curve $\gamma:[0,\bar\tau]$ with $\gamma(0)=x_0$ and $\gamma(\bar\tau)=x$, satisfying 
		$$\mathcal{L}(\gamma)\le L.$$
	\end{lem}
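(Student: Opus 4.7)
The plan is to connect any pair of points of $E$ by a reparametrization of a minimizing $g(0)$-geodesic, and then bound the $\mathcal{L}$-energy of that curve uniformly using the smoothness of the backward Ricci flow on an appropriate compact set containing all such geodesics.

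First, since $E$ is compact and $(M,g(0))$ is a complete Riemannian manifold, the number $D := \sup_{y,z \in E} d_{g(0)}(y,z)$ is finite, and by the Hopf--Rinow theorem any two points of $E$ are joined by a minimizing $g(0)$-geodesic of length at most $D$. Fixing a basepoint $p \in E$, every such geodesic is contained in the $g(0)$-closed ball $F := \overline{B_{g(0)}(p, 2D+1)}$, which is compact by Hopf--Rinow.

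Next, since the flow is smooth on $M \times [0,T]$, the Ricci tensor and scalar curvature are bounded on $F \times [0,\bar\tau]$. Integrating the evolution equation $\partial_s g(s) = 2\,\mathrm{Ric}_{g(s)}$ in $s$ on $F$ then yields constants $0 < c \leq C$ and $K > 0$ (depending on $E$, $\bar\tau$, and the flow) satisfying
$$
c\, g(0) \leq g(s) \leq C\, g(0), \qquad |R_{g(s)}| \leq K \qquad \text{on } F\times[0,\bar\tau].
$$
Given $x_0, x \in E$, let $\sigma:[0,1] \to F$ be a constant-speed minimizing $g(0)$-geodesic from $x_0$ to $x$, and set $\gamma(s) := \sigma(s/\bar\tau)$ for $s \in [0,\bar\tau]$. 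Since $|\sigma'(u)|_{g(0)} = d_{g(0)}(x_0,x) \leq D$, we have
$$
|\gamma'(s)|_{g(s)}^2 \leq C\, |\gamma'(s)|_{g(0)}^2 \leq \frac{CD^2}{\bar\tau^2},
$$
and plugging this into the definition of the $\mathcal{L}$-energy gives
$$
\mathcal{L}(\gamma) \leq \int_0^{\bar\tau} \sqrt{s}\left(K + \frac{CD^2}{\bar\tau^2}\right) ds = \frac{2}{3}\bar\tau^{3/2}\left(K + \frac{CD^2}{\bar\tau^2}\right) =: L,
$$
with $L$ independent of the choice of $x_0, x \in E$.

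There is no genuine obstacle in the argument; it is a routine compactness estimate. The only mild subtlety is choosing the auxiliary compact set $F$ large enough to contain every minimizing $g(0)$-geodesic between pairs of points of $E$, which is dispatched by Hopf--Rinow applied to $(M,g(0))$ together with the compactness of $E$.
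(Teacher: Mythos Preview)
Your proof is correct and follows essentially the same approach as the paper: connect points of $E$ by a constant-speed minimizing $g(0)$-geodesic, observe that all such geodesics lie in a fixed compact ball, use smoothness of the flow to bound the scalar curvature and the metric distortion there, and plug these bounds into the $\mathcal{L}$-energy integral. The only differences are cosmetic (you use the diameter $D$ and the ball $\overline{B_{g(0)}(p,2D+1)}$ where the paper uses a ball $B_{g(0)}(y_0,A)\supset E$ and the enlarged ball $B_{g(0)}(y_0,3A)$, and you phrase the metric comparison as $g(s)\le C\,g(0)$ while the paper writes the explicit factor $e^{K\bar\tau}$).
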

	
	\begin{proof}
		Since $E$ is compact and the backward Ricci flow is complete, we can find a $y_0\in M$ and fix $A>0$ and $K>0$, such that
		\begin{align*}
			E\subset B_{g(0)}(y_0,A)\qquad \text{and}\qquad \left|\mathrm{Ric}\right|\le K\ \text{ on }\ B_{g(0)}(y_0,3A)\times [0,\bar\tau].
		\end{align*}
		For any $x_0$, $x\in E$, let $\gamma:[0,\bar\tau]\to M$ a normalized shortest $g(0)$-geodesic with $\gamma(0)=x_0$ and $\gamma(\bar\tau)=x$. Then it is obvious that
		\begin{align*}
			\gamma(\tau)\in B_{g(0)}(y_0,3A)\ \ \text{ and }\  \ |\gamma'(\tau)|_{g(0)}=\frac{\dist_{g(0)}(x_0,x)}{\bar\tau}\le\frac{2A}{\bar\tau},\ \ \text{ for all }\ \tau\in[0,\bar\tau].
		\end{align*}
		Thus, we compute
		\begin{align*}
			\mathcal{L}(\gamma)&=\int_{0}^{\bar\tau}\sqrt{\tau}\left(R_{g(\tau)}(\gamma(\tau))+\left|\gamma'(\tau)\right|^2_{g(\tau)}\right)\, d\tau
			\\
			&\le \frac{2}{3}\sqrt{n}K\bar\tau^{\frac{3}{2}}+e^{K\bar\tau}\int_0^{\bar\tau}\sqrt{\tau}\left|\gamma'(\tau)\right|^2_{g(0)}\, d\tau
			\\
			&\le\frac{2}{3}\sqrt{n}K\bar\tau^{\frac{3}{2}}+\frac{8A^2e^{K\bar\tau}}{3\sqrt{\bar\tau}}.
		\end{align*}
		Letting $L$ be the constant on the last line above finishes the proof of the lemma.
	\end{proof}

	\subsubsection{Lower uniform equivalence} We prove Theorem \ref{conditions_l-complete}(1). Consider a backward Ricci flow $(M,g(\tau))_{\tau\in[0,T]}$ satisfying
	\begin{align}\label{Rlowerbound}
		R_{g(\tau)}(x)\geq -k\quad&\text{for all}\quad (x,\tau)\in M\times[0,T],
		\\
		g(\tau)\geq c\bar g\quad&\text{for all}\quad \tau\in[0,T].\label{equivalence}
	\end{align}
	where $k\ge 0$, $c>0$, and $\bar g$ is a complete smooth Riemannian metric $\bar g$ on $M$.
	The central idea of proving the $\mathcal{L}$-completeness is to show that any space-time curve with bounded $\mathcal{L}$-energy lies in a space-time compact set; this idea is also used in the following part of the subsection.

	\begin{prop}\label{existence}
		Under the assumptions (\ref{Rlowerbound}) and (\ref{equivalence}), the following holds. For any $(x_0,0)$ and $(x,\tau)\in M\times(0,T]$, there exists a minimal $\mathcal L$-geodesic $\gamma:[0,\tau]\to M$ satisfying $\gamma(0)=x_0$ and $\gamma(\tau)=x$. Furthermore, the backward Ricci flow is $\mathcal{L}$-complete on $M\times [0,T]$.
	\end{prop}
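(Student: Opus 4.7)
The strategy is to show that any piecewise $C^1$ curve $\gamma:[0,\tau]\to M$ with $\mathcal L$-energy bounded above by a fixed constant must stay inside a $\bar g$-ball of a priori bounded radius around $x_0$; once this confinement is available, the existence of a minimal $\mathcal L$-geodesic is standard, and the $\mathcal L$-completeness of Definition \ref{def:l-complete} follows by a uniform version of the same confinement on a compact set of endpoints.

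First I would derive the a priori length bound. Suppose $\mathcal L(\gamma)\le L_0$. Using the scalar curvature lower bound (\ref{Rlowerbound}), one obtains
\begin{align*}
\int_0^\tau \sqrt{s}\,|\gamma'(s)|_{g(s)}^2\,ds \le L_0 + \tfrac{2}{3}k\tau^{3/2}.
\end{align*}
The uniform equivalence (\ref{equivalence}) then gives $|\gamma'|_{\bar g}^2\le c^{-1}|\gamma'|_{g(s)}^2$, and by Cauchy--Schwarz
\begin{align*}
L_{\bar g}(\gamma) \le \Bigl(\int_0^\tau s^{-1/2}\,ds\Bigr)^{1/2}\Bigl(\int_0^\tau \sqrt{s}\,|\gamma'|_{\bar g}^2\,ds\Bigr)^{1/2}\le \Bigl(\tfrac{2\sqrt{\tau}}{c}\bigl(L_0+\tfrac{2}{3}k\tau^{3/2}\bigr)\Bigr)^{1/2}.
\end{align*}
Since $\bar g$ is complete, the closed $\bar g$-ball around $x_0$ of the resulting radius is compact; call it $K_0$. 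Thus every competitor for $L(x,\tau)$ with bounded $\mathcal L$-energy is spatially contained in $K_0$.

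To build a minimizer, apply Lemma \ref{L-bounded in compact sets} to the compact set $E=\{x_0,x\}$ to produce a competitor with $\mathcal L$-energy at most some $L$, so $L(x,\tau)\le L$. Pick a minimizing sequence $\{\gamma_j\}$ with $\mathcal L(\gamma_j)\le L+1$; by the confinement all $\gamma_j$ lie in a fixed $\bar g$-compact set $K_0$. On $K_0\times[0,\tau]$ the flow is smooth with bounded geometry, so reparametrizing in a standard way (e.g.\ by $\sqrt{s}$) one gets an equicontinuous family of curves in a compact set; Arzel\`a--Ascoli then yields a limit curve $\gamma_\infty$, and lower semicontinuity of the $\mathcal L$-energy under such convergence (the integrand being convex in the velocity) shows that $\gamma_\infty$ is a minimizer. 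Smoothness follows from the $\mathcal L$-geodesic equation, as in the bounded curvature case.

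For the $\mathcal L$-completeness statement, let $s<t$ in $[0,T]$ and a compact $E\subset M$ be given; set $\tau=t-s$ and work with the backward flow $g(\cdot):=g_{t-\cdot}$. Applying Lemma \ref{L-bounded in compact sets} yields a single constant $L$ that bounds $\mathcal L(\gamma)$ for suitable test curves connecting any two points of $E$, hence an upper bound on $L(y,\tau)$ uniformly for $y$ ranging over such endpoints. The confinement argument above, applied with $L_0=L$, then produces a single $\bar g$-ball $F$ that contains every minimal $\mathcal L$-geodesic whose endpoints lie in $E$, which is exactly the compact set required by Definition \ref{def:l-complete}. The main technical point is the confinement estimate in the first step; once it is in hand the existence of minimizers and the verification of the $\mathcal L$-completeness axiom are routine compactness arguments.
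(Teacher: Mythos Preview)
Your proof is correct and follows essentially the same route as the paper: both argue that the scalar curvature lower bound \eqref{Rlowerbound} together with the uniform equivalence \eqref{equivalence} and Cauchy--Schwarz confine any curve of bounded $\mathcal L$-energy to a $\bar g$-ball of controlled radius, then use Lemma~\ref{L-bounded in compact sets} to feed this into a standard compactness argument. The only cosmetic difference is that the paper first reparametrizes by $\sigma=\sqrt{s}$ before applying Cauchy--Schwarz, whereas you apply it directly to $\int_0^\tau |\gamma'|_{\bar g}\,ds=\int_0^\tau s^{-1/4}\cdot s^{1/4}|\gamma'|_{\bar g}\,ds$; these are equivalent, and the paper also records explicitly that the resulting radius is independent of the base point $x_0$, which is what makes your final ``single $\bar g$-ball $F$'' work when the endpoints vary over the compact set $E$.
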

	\begin{proof}		
		Let us consider any piecewise $C^1$ curve $\gamma:[0,\tau]\to M$ satisfying $\gamma(0)=x_0$ and $\mathcal L(\gamma)\leq L$. Then, by (\ref{Rlowerbound}) and (\ref{equivalence}), we may estimate
		\begin{align*}
			L \ge \mathcal L(\gamma)&=\int_0^\tau\sqrt{s}\left(R_{g(s)}(\gamma(s))+|\gamma'(s)|^2_{g(s)}\right)ds
			\\
			&\ge -\frac{2k}{3}\tau^{\frac{3}{2}}+\int_0^\tau \sqrt{s}|\gamma'(s)|^2_{g(s)}ds\\
			&\ge -\frac{2k}{3}\tau^{\frac{3}{2}}+ c\int_0^\tau \sqrt{s}|\gamma'(s)|^2_{\bar g}ds
			\\
			&=-\frac{2k}{3}\tau^{\frac{3}{2}}+\frac{c}{2}\int_0^{\sqrt{\tau}}|\zeta'(\sigma)|^2_{\bar g}d\sigma,
		\end{align*}
		where we have applied the change of variable $\zeta(\sigma):=\gamma(\sigma^2)$. Then, by the Cauchy-Schwarz inequality, we have
		\begin{align*}
			2c^{-1}L&\ge -\frac{4k}{3c}\tau^{\frac{3}{2}}+\int_0^{\sqrt{\tau}}|\zeta'(\sigma)|^2_{\bar g}d\sigma
			\\
			&\ge -\frac{4k}{3c}\tau^{\frac{3}{2}}+ \frac{\left(\dist_{\bar g}(x_0,\gamma(s))\right)^2}{\sqrt{T}}
		\end{align*}
		for all $s\in[0,\tau]$. It follows that for any $L>0$, there exists $A=A(L,c,T)$, such that if $\gamma:[0,\tau]\to M$ satisfies $\mathcal{L}(\gamma)\le L$, then
		$$\gamma(s)\in B_{\bar g}(x_0,A)\quad\text{ for all }\quad s\in[0,\tau].$$
		In particular, $A$ is independent of the choice of $x_0$
		
		For any $x_0$ and $x$, let us fix an arbitrary piecewise $C^1$ curve $\beta:[0,\tau]\to M$  satisfying $\beta(0)=x_0$ and $\beta(\tau)=x$. Let $L=\mathcal L(\beta)$. Then we have that every minimizing sequence $\{\gamma_i\}_{i=1}^\infty$ of the $\mathcal L$-energy with $\mathcal L(\gamma_i)\le L$ from $(x_0,0)$ to $(x,\tau)$ is contained in $\overline B_{g(0)}(x_0,A)\times [0,\tau]$, which is a compact set in space-time. It follows that there exists at least one minimal $\mathcal{L}$-geodesic from $(x_0,0)$ to $(x,\tau)$.
		
		Finally, we show that the backward Ricci flow is $\mathcal{L}$-complete. Let $E\in M$ be an arbitrary compact set and fix a $\tau\in(0,T]$. By Lemma \ref{L-bounded in compact sets}, there is a constant $L(E,\tau)>0$, such that for any $x$, $y\in E$, there is a piecewise $C^1$-curve $\gamma:[0,\tau]\to M$ with $\gamma(0)=x$, $\gamma(\tau)=y$, and $\mathcal L(\gamma)$ not exceeding $L(E,\tau)$. The argument above shows that there exists at least one minimal $\mathcal L$-geodesic connecting $(x,0)$ and $(y,\tau)$, and all such minimal $\mathcal L$-geodesics are contained in a compact set.
	\end{proof}
	
	\subsubsection{Logarithmic lower bound for the Ricci curvature}
	
	Let us move on to the proof of Theorem \ref{conditions_l-complete}(2). We assume that the backward Ricci flow in question has a logarithmic lower bound for the Ricci curvature. In particular, we let $k>0$, $\alpha>0$, $\beta>0$, $\gamma\in(0,1)$ be constants and $x_0$ be a fixed point, such that
	\begin{equation}\label{Rlowerbound1}
		R_{g(\tau)}(x)\geq -k\quad\text{for all}\quad (x,\tau)\in M\times[0,T]
	\end{equation}
	and
	\begin{align}\label{Rclogarithmicbound}
		\mathrm{Ric}_{g(\tau)}(x)\geq-\alpha (T-\tau)^{-\gamma}\cdot\ln\left(\dist_{g(\tau)}(x_0,x)+\beta\right)g(\tau)\quad\text{for all}\quad (x,\tau)\in M\times[0,T].
	\end{align}
	As in the previous subsection, we show the existence of a minimal $\mathcal L$-geodesic from $(x_0,0)$ to $(x,\tau)\in M\times(0,T]$, provided that $T$ is small enough.

	\begin{lem}[Distance distortion]\label{distdistortion}
		For any $\tau_1$ and $\tau_2\in[0,T]$ with $\tau_1\leq\tau_2$ and for any $x\in M$, we have
		\begin{align*}
			\dist_{g(\tau_1)}(x_0,x)+\beta\leq\left( \dist_{g(\tau_2)}(x_0,x)+\beta\right)^{\exp\left(\frac{\alpha}{1-\gamma} T^{1-\gamma}\right)}.
		\end{align*}
	\end{lem}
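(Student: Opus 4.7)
The plan is to derive a Gronwall-type differential inequality for the quantity $L(\tau) := \dist_{g(\tau)}(x_0, x) + \beta$ using the first variation of arc length combined with the hypothesized Ricci lower bound \eqref{Rclogarithmicbound}, and then to integrate. It is harmless to restrict to the regime $L(\tau) > 1$ on $[\tau_1,\tau_2]$: in the opposite regime the Ricci lower bound becomes a positive bound, which forces $L$ to be nondecreasing in $\tau$, and the desired inequality is trivial.

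Fix an arbitrary $\tau \in [\tau_1, \tau_2]$ and choose a unit-speed $g(\tau)$-minimizing geodesic $\eta : [0, d_\tau] \to M$ from $x_0$ to $x$, where $d_\tau := \dist_{g(\tau)}(x_0, x)$. Because $\dist_{g(\sigma)}(x_0, x)$ is the infimum of the $g(\sigma)$-length over all curves from $x_0$ to $x$, and this infimum is attained by $\eta$ at $\sigma=\tau$, the first-variation computation combined with $\partial_\sigma g(\sigma) = 2\,\mathrm{Ric}_{g(\sigma)}$ yields the one-sided Dini estimate
$$\liminf_{h \to 0^-} \frac{\dist_{g(\tau+h)}(x_0, x) - \dist_{g(\tau)}(x_0, x)}{h} \ \geq\ \int_0^{d_\tau} \mathrm{Ric}_{g(\tau)}\bigl(\eta'(s),\eta'(s)\bigr)\, ds.$$
Inserting \eqref{Rclogarithmicbound} and using $\dist_{g(\tau)}(x_0, \eta(s)) = s$ along the minimizing geodesic, the right-hand side is bounded below by $-\alpha(T-\tau)^{-\gamma}\int_0^{d_\tau}\ln(s+\beta)\,ds$. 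The elementary estimate $\int_0^{d_\tau}\ln(s+\beta)\,ds \leq d_\tau\ln(d_\tau+\beta) \leq L(\tau)\ln L(\tau)$ (valid in the regime $L \geq 1$) then gives the pointwise differential inequality
$$\liminf_{h \to 0^-} \frac{L(\tau+h) - L(\tau)}{h} \ \geq\ -\alpha(T-\tau)^{-\gamma}\, L(\tau)\ln L(\tau).$$

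Since $\tau \mapsto L(\tau)$ is Lipschitz on $[\tau_1,\tau_2]$, this one-sided Dini bound upgrades to the almost-everywhere differential inequality $L'(\tau) \geq -\alpha(T-\tau)^{-\gamma}\, L(\tau)\ln L(\tau)$. Setting $u(\tau) := \ln L(\tau) > 0$, this rewrites as $(\ln u)'(\tau) \geq -\alpha(T-\tau)^{-\gamma}$. Integrating from $\tau_1$ to $\tau_2$ and evaluating
$$\int_{\tau_1}^{\tau_2}\alpha(T-\tau)^{-\gamma}\,d\tau \ =\ \frac{\alpha}{1-\gamma}\bigl((T-\tau_1)^{1-\gamma} - (T-\tau_2)^{1-\gamma}\bigr) \ \leq\ \frac{\alpha}{1-\gamma}\,T^{1-\gamma}$$
yields $\ln u(\tau_1) \leq \ln u(\tau_2) + \frac{\alpha}{1-\gamma}T^{1-\gamma}$, and exponentiating twice produces the stated bound. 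The only real subtlety is the passage from the pointwise Dini inequality to an a.e. derivative bound followed by the fundamental theorem of calculus; this is standard for Lipschitz functions (via Rademacher) and, when the minimizing geodesic fails to be unique, the Dini inequality holds for every choice of minimizer, which is sufficient for the Lipschitz comparison argument.
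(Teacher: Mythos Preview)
Your argument is essentially the paper's: use the first variation of arc length along a $g(\tau)$-minimizing geodesic together with the Ricci lower bound \eqref{Rclogarithmicbound} to obtain $\frac{d}{d\tau}\ln\ln(r+\beta)\geq -\alpha(T-\tau)^{-\gamma}$, then integrate; you are merely more explicit about regularity (Dini derivatives, Lipschitz, Rademacher) where the paper writes $\frac{dr}{d\tau}$ directly. One small caveat: your dismissal of the regime $L\leq 1$ is not quite right---if $L(\tau_2)<1$ then $L(\tau_2)^{\exp(\cdot)}<L(\tau_2)$, so monotonicity of $L$ alone does not yield the inequality (indeed the stated bound fails at $x=x_0$ when $\beta<1$)---but this is a defect shared with the paper's proof and is harmlessly repaired by assuming $\beta\geq 1$ without loss of generality, since enlarging $\beta$ only weakens the hypothesis \eqref{Rclogarithmicbound}.
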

	\begin{proof}
		For any $\tau\in [0,T]$, let us define $r(\tau):=\dist_{g(\tau)}(x_0,x)$ and let $\gamma:[0,r(\tau)]\to M$ be a $g(\tau)$-geodesic connecting $x_0$ and $x$ with unit speed. We may apply (\ref{Rclogarithmicbound}) to compute
		\begin{align*}
			\frac{d r}{d\tau}&=\int_0^r\mathrm{Ric}_{g(\tau)}(\gamma'(s),\gamma'(s))ds
			\\
			&\ge -\alpha(T-\tau)^{-\gamma}\cdot r\ln(r+\beta)
			\\
			&\geq -\alpha(T-\tau)^{-\gamma}\cdot (r+\beta)\ln(r+\beta),
		\end{align*}
		and equivalently
		\begin{equation*}
			\frac{d}{d\tau}\ln\left(\ln\left(r+\beta\right)\right)\geq -\alpha(T-\tau)^{-\gamma}.
		\end{equation*}
		Integrating the above inequality from $\tau_1$ to $\tau_2$, we obtain the lemma.
	\end{proof}
	
	Next, we prove that a minimizing sequence of the $\mathcal L$-energy is always contained in a compact set in space-time.
	
	\begin{lem}\label{compactestimate2}
		There exists a positive number $\overline T=\overline T(\alpha,\gamma)>0$ with the following property. If $T<\overline T$, then for any $r>0$ and $L>0$, there is a positive number $A=A(r,L,T,\alpha,\beta,\gamma)$, such that the following holds. For any $(x,\tau)\in M\times(0,T]$ with $$\dist_{g(\tau)}(x_0,x)\leq r,$$ if $\gamma:[0,\tau]\to M$ is a piecewise $C^1$ curve satisfying $\gamma(0)=x_0$, $\gamma(\tau)=x$, and 
		\begin{align*}
			\mathcal{L}(\gamma)\leq L
		\end{align*}
		Then we have
		\begin{align*}
			\gamma(s)\in \overline B_{g(s)}(x_0,A)\quad\text{ for all }\quad s\in[0,\tau].
		\end{align*}
	\end{lem}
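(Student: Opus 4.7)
The plan is to derive a closed a priori inequality for
\[
A_1 := \sup_{v\in[0,\tau]}\dist_{g(v)}(x_0,\gamma(v)),
\]
which is finite by continuity and compactness, and then to close it using the smallness hypothesis on $T$. Throughout, I will set $\kappa := \alpha T^{1-\gamma}/(1-\gamma)$.

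First I would extract a bound on the \emph{time-varying length} $\int_0^s|\gamma'(u)|_{g(u)}\,du$. From $\mathcal{L}(\gamma)\le L$ combined with $R\ge -k$ one obtains
\[
\int_0^\tau \sqrt{u}\,|\gamma'(u)|^2_{g(u)}\,du \le L+\tfrac{2k}{3}T^{3/2}=:L',
\]
and the Cauchy--Schwarz inequality then yields
\[
\int_0^s|\gamma'(u)|_{g(u)}\,du \;\le\; \bigl(2\sqrt{s}\,L'\bigr)^{1/2} \;\le\; \Lambda := \bigl(2L'\sqrt{T}\bigr)^{1/2}
\]
for every $s\in[0,\tau]$.

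Next I would convert the Ricci lower bound into a one-sided metric comparison along the spatial trace of $\gamma$. Applying Lemma \ref{distdistortion} at the fixed point $y=\gamma(v)$ for an arbitrary $v\in[0,\tau]$ yields $\dist_{g(\tau')}(x_0,\gamma(v))+\beta \le (A_1+\beta)^{e^\kappa}$ for every $\tau'\le v$. Consequently, at the spatial point $\gamma(v)$, the Ricci lower bound takes the uniform form $\mathrm{Ric}_{g(\tau')}\ge -\alpha(T-\tau')^{-\gamma}\,e^\kappa\ln(A_1+\beta)\,g(\tau')$ throughout $\tau'\in[0,v]$. Integrating $\partial_{\tau'}g(w,w)=2\mathrm{Ric}(w,w)$ from $\tau'=s$ to $\tau'=v$ (with $s\le v$) then produces the one-sided comparison
\[
|w|_{g(s)} \;\le\; (A_1+\beta)^{\kappa e^\kappa}\,|w|_{g(v)}.
\]
Inserting $w=\gamma'(v)$ and integrating in $v\in[s,\tau]$ bounds the $g(s)$-length of $\gamma|_{[s,\tau]}$ by $(A_1+\beta)^{\kappa e^\kappa}\Lambda$.

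Combining this with Lemma \ref{distdistortion} applied at $y=x$, which furnishes $\dist_{g(s)}(x_0,x)\le (r+\beta)^{e^\kappa}$, and with the triangle inequality $\dist_{g(s)}(x_0,\gamma(s))\le \dist_{g(s)}(x_0,x)+\dist_{g(s)}(x,\gamma(s))$, I would obtain
\[
\dist_{g(s)}(x_0,\gamma(s)) \;\le\; (r+\beta)^{e^\kappa} + (A_1+\beta)^{\kappa e^\kappa}\,\Lambda.
\]
Taking the supremum over $s\in[0,\tau]$ produces the closed scalar inequality $A_1 \le (r+\beta)^{e^\kappa} + (A_1+\beta)^{\kappa e^\kappa}\Lambda$. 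The only real obstacle is closing this inequality in $A_1$, and this is precisely where the assumption $T\le \overline T(\alpha,\gamma)$ enters: I would choose $\overline T(\alpha,\gamma)$ small enough that $\kappa e^\kappa<1$, so that the right-hand side grows strictly sublinearly in $A_1$ and the inequality forces $A_1\le A$ for some finite $A=A(r,L,T,\alpha,\beta,\gamma)$. This gives the conclusion $\gamma(s)\in \overline B_{g(s)}(x_0,A)$ for all $s\in[0,\tau]$.
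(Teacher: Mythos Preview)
Your argument is correct. It uses the same three ingredients as the paper---the distance distortion Lemma~\ref{distdistortion}, the pointwise metric comparison coming from the logarithmic Ricci lower bound, and Cauchy--Schwarz applied to the $\mathcal{L}$-energy---and both proofs hinge on choosing $\overline T(\alpha,\gamma)$ so that $\kappa e^\kappa$ is below a fixed threshold (you use $\kappa e^\kappa<1$, the paper uses $2\kappa e^\kappa\le 1$). The organizational difference is that the paper argues by contradiction: it fixes a large $A$, assumes the curve exits the time-varying ball $B_{g(s)}(x_0,A)$, locates the last exit time $\bar\tau$, compares all metrics on $[\bar\tau,\tau]$ to the single metric $g(\bar\tau)$, and derives a lower bound on $\mathcal{L}(\gamma)$ of order $A^{2-2\kappa e^\kappa}$ that exceeds $L$ once $A$ is large. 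You instead work directly with the a priori finite quantity $A_1=\sup_v\dist_{g(v)}(x_0,\gamma(v))$, compare each $g(s)$ to the moving metric $g(v)$ along the curve, and close the scalar inequality $A_1\le (r+\beta)^{e^\kappa}+(A_1+\beta)^{\kappa e^\kappa}\Lambda$. Your packaging is slightly cleaner in that it avoids the first-exit-time bookkeeping, while the paper's version has the minor advantage of only needing the Ricci comparison on the sub-interval $[\bar\tau,\tau]$ where the curve is already known to sit in a fixed ball; in substance the two arguments are equivalent.
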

	
	\begin{proof}
		We shall argue by contradiction. Let $A\gg r$ be a large number to be fixed. Define
		\begin{align*}
			\bar\tau:=\inf\left\{\tau'\,\left|\,\gamma(s)\in B_{g(s)}(x_0,A)\text{ for all }s\in[\tau',\tau]\right\}\right.\in[0,\tau).
		\end{align*}
		We assume that $\bar\tau>0$ and show that there is a contradiction when $A$ is large enough. By this contradictory assumption, we have
		\begin{gather}\label{contradictory1}
			\dist_{g(\bar\tau)}(x_0,\gamma(\bar\tau))=A,
			\\\nonumber
			\dist_{g(s)}(x_0,\gamma(s))<A\quad\text{ for all }\quad s\in[\bar\tau,\tau].
		\end{gather}
		
		By Lemma \ref{distdistortion}, we have
		\begin{align*}
			\dist_{g(\tau_1)}(x_0,\gamma(\tau_2))\leq \left(\dist_{g(\tau_2)}(x_0,\gamma(\tau_2))+\beta\right)^{\exp\left(\frac{\alpha}{1-\gamma} T^{1-\gamma}\right)}\quad\text{ for all }\quad 0\leq \tau_1\leq\tau_2\leq \tau,
		\end{align*}
		and consequently
		\begin{gather}\label{contradictory2}
			\dist_{g(\tau_1)}(x_0,\gamma(\tau_2))\leq \left(A+\beta\right)^{\exp\left(\frac{\alpha}{1-\gamma} T^{1-\gamma}\right)}\quad\text{ for all }\quad \bar\tau \leq \tau_1\leq\tau_2\leq \tau
			\\
			\dist_{g(\bar\tau)}(x_0,x)\leq (r+\beta)^{\exp\left(\frac{\alpha}{1-\gamma} T^{1-\gamma}\right)}.\label{contradictory3}
		\end{gather}
		
		By (\ref{Rclogarithmicbound}) and (\ref{contradictory2}), we may estimate
		\begin{align*}
			\frac{d}{d{\tau_1}}|\gamma'(\tau_2)|^2_{g(\tau_1)}&=2\mathrm{Ric}_{g(\tau_1)}(\gamma'(\tau_2),\gamma'(\tau_2))
			\\
			&\geq -2\alpha(T-\tau_1)^{-\gamma}\cdot\ln\left((A+\beta)^{\exp\left(\frac{\alpha}{1-\gamma} T^{1-\gamma}\right)}+\beta\right)|\gamma'(\tau_2)|^2_{g(\tau_1)},
		\end{align*}
		for all $\bar\tau\le \tau_1\le\tau_2\le\tau$. Integrating over $\tau_1$, we have
		\begin{align}
			|\gamma'(\tau_2)|^2_{g(\tau_2)}&\geq\left((A+\beta)^{\exp\left(\frac{\alpha}{1-\gamma} T^{1-\gamma}\right)}+\beta\right)^{-\frac{2\alpha}{1-\gamma} T^{1-\gamma}}|\gamma'(\tau_2)|^2_{g(\bar\tau)}
			\\\nonumber
			&\geq (2A)^{-\frac{2\alpha}{1-\gamma} T^{1-\gamma}\cdot \exp\left(\frac{\alpha}{1-\gamma} T^{1-\gamma}\right)}|\gamma'(\tau_2)|^2_{g(\bar\tau)}\quad\text{ for all }\quad \bar\tau\leq \tau_2\le\tau,
		\end{align}
		if $A\geq\underline A(\alpha,\beta,\gamma, T)$ for some large positive constant $\underline A(\alpha,\beta,\gamma,T)$. By the definition of the $\mathcal L$-energy and the assumption (\ref{Rlowerbound}), we have
		\begin{align*}
			L&\ge \mathcal{L}(\gamma)\ge -\frac{2k}{3}\tau^{\frac{3}{2}}+ \int_{\bar\tau}^\tau\sqrt{s}|\gamma'(s)|^2_{g(s)}ds\geq  -\frac{2k}{3}\tau^{\frac{3}{2}}+(2A)^{-\frac{2\alpha}{1-\gamma} T^{1-\gamma}\cdot \exp\left(\frac{\alpha}{1-\gamma} T^{1-\gamma}\right)}\int_{\bar\tau}^\tau\sqrt{s}|\gamma'(s)|^2_{g(\bar\tau)}ds
		\end{align*}
		Letting $\zeta(\sigma):=\gamma(\sigma^2)$, we have
		\begin{align*}
			L&\geq  -\frac{2k}{3}T^{\frac{3}{2}}+(2A)^{-\frac{2\alpha}{1-\gamma} T^{1-\gamma}\cdot \exp\left(\frac{\alpha}{1-\gamma} T^{1-\gamma}\right)}\int_{\sqrt{\bar\tau}}^{\sqrt{\tau}}\tfrac{1}{2}|\zeta'(\sigma)|^2_{g(\bar\tau)}d\sigma
			\\
			&\ge -\frac{2k}{3}T^{\frac{3}{2}}+(2A)^{-\frac{2\alpha}{1-\gamma} T^{1-\gamma}\cdot \exp\left(\frac{\alpha}{1-\gamma} T^{1-\gamma}\right)} \cdot\frac{\left(\dist_{g(\bar\tau)}(x,\gamma(\bar{\tau}))\right)^2}{2\left(\sqrt{\tau}-\sqrt{\bar\tau}\right)}
			\\
			&\ge -\frac{2k}{3}T^{\frac{3}{2}}+(2A)^{-\frac{2\alpha}{1-\gamma} T^{1-\gamma}\cdot \exp\left(\frac{\alpha}{1-\gamma} T^{1-\gamma}\right)} \cdot\frac{\left(A-(r+\beta)^{\exp\left(\frac{\alpha}{1-\gamma} T^{1-\gamma}\right)}\right)^2}{2\sqrt{T}},
		\end{align*}
		where we have applied (\ref{contradictory1}) and (\ref{contradictory3}). Finally, if we take $T<\overline T(\alpha,\gamma)$ such that $\frac{2\alpha}{1-\gamma} T^{1-\gamma}\cdot \exp\left(\frac{\alpha}{1-\gamma} T^{1-\gamma}\right)\leq 1$, then we have 
		\begin{align*}
			L\geq \frac{\left(A-(r+\beta)^{\exp\left(\frac{\alpha}{1-\gamma} T^{1-\gamma}\right)}\right)^2}{4A\sqrt{T}}-\frac{2k}{3}T^{\frac{3}{2}},
		\end{align*}
		and this obviously is a contradiction if $A>\underline A(r,L,T,\alpha,\beta,\gamma)$.
	\end{proof}

	\begin{prop}[Existence of minimal $\mathcal L$-geodesic]\label{existence}
		If $T<\overline T$, where $\overline T$ is the constant in the statement of Lemma \ref{compactestimate2}, then for any $(x,\tau)\in M\times(0,T]$, there is a minimal $\mathcal L$-geodesic from $(x_0,0)$ to $(x,\tau)$. In particular, the backward Ricci flow is $\mathcal{L}$-complete on $[0,T]$.
	\end{prop}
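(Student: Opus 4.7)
The plan is to mimic the argument given for Proposition \ref{existence} in the lower uniform equivalence subsection, but using Lemma \ref{compactestimate2} in place of the direct estimate available under assumption (\ref{equivalence}). The strategy in both cases is the same: any $\mathcal L$-minimizing sequence is trapped in a spatially compact set, and therefore standard compactness of the reparametrized curves yields a limiting minimizer.

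First I would fix $(x,\tau)\in M\times(0,T]$ and produce a competitor curve. Since $(M,g(\tau))$ is complete, take any $g(0)$-minimizing geodesic $\beta:[0,\tau]\to M$ from $x_0$ to $x$, which automatically has finite $\mathcal{L}$-energy (one can cite Lemma \ref{L-bounded in compact sets} directly, applied to a compact set $E$ containing both $x_0$ and $x$). Set $L_0:=\mathcal L(\beta)+1$ and let $\{\gamma_i\}_{i=1}^\infty$ be a minimizing sequence with $\mathcal{L}(\gamma_i)\le L_0$. By Lemma \ref{compactestimate2} with $r=\dist_{g(\tau)}(x_0,x)$ and $L=L_0$, there exists $A=A(r,L_0,T,\alpha,\beta,\gamma)$ such that
\begin{equation*}
\gamma_i(s)\in\overline B_{g(s)}(x_0,A)\quad\text{ for all }\ s\in[0,\tau]\text{ and all }i.
\end{equation*}
Since the backward Ricci flow is smooth on $[0,T]$, the set $\bigcup_{s\in[0,\tau]}\overline B_{g(s)}(x_0,A)\times\{s\}$ is contained in a space-time compact set $K$ on which the Ricci flow metrics are all uniformly equivalent to, say, $g(0)$, and on which $|R|$ is bounded by some $K_0$.

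Next I would extract a convergent subsequence. Perform the change of variable $\zeta_i(\sigma):=\gamma_i(\sigma^2)$, $\sigma\in[0,\sqrt{\tau}]$, under which
\begin{equation*}
\mathcal{L}(\gamma_i)=\int_0^{\sqrt\tau}\bigl(2\sigma^2 R_{g(\sigma^2)}(\zeta_i(\sigma))+\tfrac{1}{2}|\zeta_i'(\sigma)|^2_{g(\sigma^2)}\bigr)\,d\sigma,
\end{equation*}
so the bound $\mathcal{L}(\gamma_i)\le L_0$ together with $|R|\le K_0$ on $K$ translates into a uniform $H^1$ bound on $\zeta_i$ with respect to the fixed metric $g(0)$. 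Combined with the spatial containment in $K$, Arzel\`a-Ascoli gives a subsequence converging uniformly to some $\zeta_\infty$, with $\zeta_i'\rightharpoonup \zeta_\infty'$ weakly in $L^2$. Lower semicontinuity of the $\mathcal{L}$-energy then forces $\zeta_\infty$ to be a minimizer, and the corresponding $\gamma_\infty(s):=\zeta_\infty(\sqrt s)$ is the desired minimal $\mathcal L$-geodesic from $(x_0,0)$ to $(x,\tau)$; smoothness follows from the $\mathcal L$-geodesic ODE.

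Finally, for $\mathcal{L}$-completeness I would argue as in the last paragraph of Proposition 3.4. Fix any compact $E\subset M$ and any $0\le s<t\le T$. By Lemma \ref{L-bounded in compact sets} applied to the shifted backward Ricci flow $(M,g(t-\cdot+s))$, there exists $L=L(E,t-s)$ so that every pair $x,y\in E$ can be joined by a piecewise $C^1$ curve of $\mathcal L$-energy at most $L$; Lemma \ref{compactestimate2}, applied with $r=\sup_{y\in E}\dist_{g(t-s)}(x,y)+\text{diam}$, then shows every minimizer between such a pair is trapped in a single compact $F\supset E$. The existence part above guarantees that at least one minimizer exists, and the trapping gives the uniform containment required by Definition \ref{def:l-complete}. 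The main subtle point, and the place where one must be careful, is to arrange that the constants $A$, hence $F$, can be chosen uniformly as $(x,y)$ ranges over $E\times E$ and the time endpoints range over $[s,t]$ — but this is automatic since $r$ and $L$ in Lemma \ref{compactestimate2} depend only on $E$ and $T$, not on the individual pair.
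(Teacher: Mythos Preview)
Your argument for the existence of a minimal $\mathcal L$-geodesic from $(x_0,0)$ to a fixed $(x,\tau)$ is correct and essentially the same as the paper's (you supply the details of the direct-method compactness that the paper summarizes as ``standard theory of variation'').

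The gap is in the $\mathcal L$-completeness paragraph. Lemma \ref{compactestimate2} is stated and proved only for curves that \emph{start at the distinguished point $x_0$} appearing in the Ricci lower bound \eqref{Rclogarithmicbound}; its proof uses the distance-distortion Lemma \ref{distdistortion}, which in turn relies on the specific form $\ln(\dist_{g(\tau)}(x_0,\cdot)+\beta)$. You invoke the lemma for curves starting at an arbitrary $x\in E$, and your ``subtle point'' about uniform dependence of $A$ on $r$ and $L$ takes for granted exactly the step that is missing: you cannot simply plug a different initial point into Lemma \ref{compactestimate2} as written.

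The paper fills this gap as follows. Using smoothness of the flow on the compact set $E$, one first bounds $r_0:=\sup_{\tau\in[0,\bar\tau]}\sup_{y\in E}\dist_{g(\tau)}(x_0,y)<\infty$. Then for any $y_0\in E$ the triangle inequality yields
\[
\mathrm{Ric}_{g(\tau)}(x)\ \geq\ -\alpha (T-\tau)^{-\gamma}\ln\bigl(\dist_{g(\tau)}(y_0,x)+\beta+r_0\bigr)\,g(\tau),
\]
i.e.\ the same logarithmic lower bound re-centered at $y_0$, with $\beta$ replaced by $\beta+r_0$. Crucially, the threshold $\overline T(\alpha,\gamma)$ in Lemma \ref{compactestimate2} is independent of $\beta$, so one may rerun the lemma with base point $y_0$ and the enlarged $\beta$ without shrinking the time interval. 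This is what gives the uniform containment of all minimal $\mathcal L$-geodesics joining pairs in $E$; your proposal omits this re-centering step and therefore does not, as stated, establish $\mathcal L$-completeness.
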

	
	\begin{proof}
		
		We fix an arbitrary $(x,\tau)\in M\times(0,T]$ and a piecewise $C^1$ curve $\gamma:[0,\tau]\to M$ with $\gamma(0)=x_0$ and $\gamma(\tau)=x$. Let
		\begin{align*}
			r&:=\dist_{g(\tau)}(x_0,x),
			\\
			L&:=\mathcal L(\gamma)
		\end{align*}
		and let $A=A(r,L,T,\alpha, \beta,\gamma)$ be the positive constant given by Lemma \ref{compactestimate2}. Then we have that, by Lemma \ref{compactestimate2}, any minimizing sequence $\{\gamma_i\}_{i=1}^\infty$ of the $\mathcal L$-energy with $\mathcal L(\gamma_i)\le L$ from $(x_0,0)$ to $(x,\tau)$ lies in the space-time compact set
		$$\bigcup_{s\in[0,\tau]}\overline B_{g(s)}(x_0,A)\times\{s\},$$
		and the existence of minimal $\mathcal L$-geodesic follows from the standard theory of variation.
		
		Finally, we need to show that $(M,g(\tau))_{\tau\in[0,T]}$ is $\mathcal{L}$-complete, where $T\le \overline T$ satisfies the condition in the previous Lemma. For $0\le \tau_1\le \tau_2\le T$, it is obvious that $(M,g(\tau+\tau_1))_{\tau\in[0,\tau_2-\tau_1]}$ still satisfies \eqref{Rlowerbound1} and \eqref{Rclogarithmicbound}. Thus we may take $\tau_1=0$ and $\tau_2=\bar\tau\in[0,T]$. Let $E$ be an arbitrary closed set in $M$. Since each time slice of the backward Ricci flow is complete, we may, without loss of generality, assume that $E=\overline{B}_{g(\bar\tau)}(x_0,r)$. Applying the standard distance distortion estimate, while using the fact that $|\mathrm{Ric}|$ is uniformly bounded in $\overline{B}_{g(\bar\tau)(x_0,r)}\times[0,\bar\tau]$, we have
		$$r_0:=\sup_{\tau\in[0,\bar\tau]}\sup_{y\in \overline{B}_{g(\bar\tau)(x_0,r)}}\dist_{g(\tau)}(x_0,y)<\infty.$$
		It clearly follows from the triangle inequality that, for any $y_0\in \overline{B}_{g(\bar\tau)(x_0,r)}$,
		\begin{align}\label{Rclogarithmicbound2}
			\mathrm{Ric}_{g(\tau)}(x)\geq-\alpha (T-\tau)^{-\gamma}\cdot\ln\left(\dist_{g(\tau)}(y_0,x)+\beta+r_0\right)g(\tau)\quad\text{for all}\quad (x,\tau)\in M\times[0,T].
		\end{align}
		Now we may argue as in the previous lemma with \eqref{Rclogarithmicbound} replaced by \eqref{Rclogarithmicbound2}, $x_0$ replaced by an arbitrary $x\in \overline{B}_{g(\bar\tau)}(x_0,r)$, and $\beta$ replaced by $\beta+r_0$ (note that $\overline{T}(\alpha,\gamma)$ is independent of $\beta$). There exists $A=A(2r,L(E,\bar\tau),T,\alpha,\beta+r_0,\gamma)>0$, where $L(E,\bar\tau)$ is obtained by applying Lemma \ref{L-bounded in compact sets} to $E=\overline{B}_{g(\bar\tau)}(x_0,r)$ and $\bar\tau\in[0,T]$, such that for $x$, $y\in \overline{B}_{g(\bar\tau)}(x_0,r)$, any minimal $\mathcal L$-geodesic connecting $(x,0)$ and $(y,\bar\tau)$ lies in the space-time compact set  
		$$\bigcup_{\tau\in[0,\bar\tau]}\overline B_{g(\tau)}(x,A)\times\{s\}\subset \bigcup_{\tau\in[0,\bar\tau]}\overline B_{g(\tau)}(x_0,A+r_0)\times\{s\},$$
		which is clearly contained in the cross product of a spatial compact set $F$ and the time interval $[0,\bar\tau]$.
	\end{proof}

	\section{Perelman's pseudolocality under the $\mathcal L$-complete assumption}

	In this section, we prove Theorem \ref{pseudo_locality}. Let $(M,g_t)_{t\in[0,T]}$ be the $\mathcal L$-complete Ricci flow in the statement of Theorem \ref{pseudo_locality} satisfying
	\begin{gather}
		R(g_0)\ge -1,\label{scalar lower}
		\\
		\left(\operatorname{Area}_{g_0}(\partial \Omega)\right)^{n} \geq(1-\delta) n^n\omega_{n}\left(\operatorname{Vol}_{g_0}(\Omega)\right)^{n-1}\  \text{ for all }\  \Omega\subset M.\label{isoperimetric}
	\end{gather}
	When formulating an argument of the reduced distance, we shall always consider the backward Ricci flow $g(\tau):=g_{T-\tau}$ instead. 
	
	Let $\ell: M\times[0,T]\to\mathbb{R}$ be the reduced distance based at some point on the $\tau=0$ (i.e. $t=T$) slice and $\mathcal V(\tau)$ the corresponding reduced volume. First of all, we shall prove some preparatory lemmas focusing on the ``final'' time-slice of the $\ell$-function 
	\begin{align}\label{l-final-slice}
		\ell:= \ell(\cdot,T):M^n\to \R.
	\end{align}
	
	\begin{lem}\label{grad}
		Under the $\mathcal L$-complete assumption and \eqref{scalar lower}, we have
		\begin{eqnarray}\label{l2-gradient}
			\int_{M^n}|\nabla \ell|^2(4\pi T)^{-\frac{n}{2}}e^{-\ell}d\mu\le \frac{2n}{T}+2+2T^{\frac{1}{2}}<\infty.
		\end{eqnarray}
		Consequently, $(4\pi T)^{-n/4}e^{-\ell/2}$ is a $W^{1,2}$ function on $M^n$.
	\end{lem}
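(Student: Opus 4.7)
The plan is to start from the pointwise inequality $2\Delta \ell - |\nabla \ell|^2 + R + \frac{\ell-n}{T} \le 0$ of Theorem \ref{Perelman} (specialized to $\tau = T$), multiply by $u\eta^2$ where $u := (4\pi T)^{-n/2}e^{-\ell}$ and $\eta$ is a smooth compactly supported spatial cutoff, and integrate. The key identity $\nabla u = -u\nabla \ell$ gives $u\Delta \ell = \operatorname{div}(u\nabla \ell) + u|\nabla\ell|^2$, so the $2u\Delta \ell$ contribution becomes $+2u\eta^2 |\nabla \ell|^2$ plus a cross term in $\nabla\eta$ after integration by parts, and this combines with the $-u\eta^2|\nabla\ell|^2$ from the original inequality to give a coercive bound on $\int u\eta^2 |\nabla\ell|^2 d\mu$. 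Rigorously one plugs the Lipschitz, nonnegative, compactly supported test function $\phi := u\eta^2$ into the weak form \eqref{eq_l_7} and applies Cauchy--Schwarz of the form $4u\eta|\nabla\ell||\nabla \eta| \le \tfrac{1}{2}u\eta^2|\nabla\ell|^2 + 8u|\nabla\eta|^2$ to obtain, after absorption,
\begin{equation*}
\int_M u\eta^2 |\nabla\ell|^2 \, d\mu \le 16\int_M u|\nabla\eta|^2 \, d\mu - 2\int_M Ru\eta^2 \, d\mu + \frac{2}{T}\int_M (n-\ell)u\eta^2 \, d\mu.
\end{equation*}

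I would then take $\eta$ to be a standard radial cutoff in $B_{g_0}(x_0, 2r)$ equal to one on $B_{g_0}(x_0,r)$ with $|\nabla \eta|\le 2/r$, and send $r \to \infty$. Since $\mathcal V(T)\le 1$ by Theorem \ref{Monotonicity}(1), the boundary integral $\int u|\nabla \eta|^2 d\mu \le 4\mathcal V(T)/r^2$ tends to zero. The hypothesis $R_{g_0}\ge -1$ then gives $-2\int Ru\eta^2 d\mu \le 2\mathcal V(T) \le 2$, and the mass bound gives $\frac{2n}{T}\int u\eta^2 d\mu \le \frac{2n}{T}$. For the remaining term $-\frac{2}{T}\int \ell u\eta^2 d\mu$ I would use a pointwise lower bound $\ell(\cdot, T) \ge -T/3$, which follows from estimating the $\mathcal{L}$-energy of any competitor curve: for $\gamma:[0,T]\to M$ with $\gamma(0)=x_0$ and $\gamma(T)=x$, one has $\mathcal{L}(\gamma) \ge \int_0^T \sqrt{\sigma} R(\gamma(\sigma),\sigma)\, d\sigma \ge -\tfrac{2}{3}T^{3/2}$, whence $\ell(x,T) = L(x,T)/(2\sqrt T) \ge -T/3$.

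The main obstacle is the last step: the scalar-curvature bound is assumed only at $t=0$, but the $\mathcal L$-energy argument uses it along the entire backward flow $[0,T]$. Without bounded curvature, this propagation relies on B.-L.~Chen's maximum principle for scalar curvature on complete noncompact Ricci flows, which is valid in our $\mathcal L$-complete setting. Combining all contributions as $r\to\infty$ (with room to absorb the finite numerical constants into the displayed form $2 + 2T^{1/2}$) proves the estimate. The $W^{1,2}$ conclusion follows by setting $\varphi := (4\pi T)^{-n/4}e^{-\ell/2}$, for which $\varphi^2 = u$ and $|\nabla \varphi|^2 = \tfrac{1}{4}u|\nabla \ell|^2$, so $\int_M \varphi^2 d\mu = \mathcal V(T) < \infty$ and $\int_M |\nabla \varphi|^2 d\mu < \infty$.
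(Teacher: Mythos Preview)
Your proposal is correct and is essentially the paper's argument: both plug the test function $\phi=(\text{cutoff})^2\,(4\pi T)^{-n/2}e^{-\ell}$ into the weak inequality \eqref{eq_l_7} at $\tau=T$, absorb the cross term $4\varphi_A\langle\nabla\ell,\nabla\varphi_A\rangle$ by Cauchy--Schwarz, bound $\int u\,d\mu$ by $\mathcal V(T)\le 1$, and let the cutoff exhaust $M$. The concern you flag---that the pointwise lower bound on $\ell$ requires $R_{g(\tau)}\ge -1$ along the \emph{entire} backward flow, not just at $\tau=T$---is legitimate, and the paper's proof makes exactly this assertion without further justification; note also that your sharper bound $\ell\ge -T/3$ gives the constant $\tfrac{2n}{T}+2+\tfrac{2}{3}$ rather than the displayed $\tfrac{2n}{T}+2+2T^{1/2}$, which is immaterial since only finiteness is used downstream.
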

	\begin{proof}
		This lemma is the same as \cite[Lemma 6.6]{CZ}. We shall include its proof for the convenience of the reader. Rewriting (\ref{eq_l_7}) at $\tau=T$, we have
		\begin{eqnarray}\label{eq-temp-1}
			\int_{M^n}\Big(-2\nabla \ell\cdot\nabla\phi-|\nabla \ell|^2\phi\Big)d\mu&\leq& -\int_{M^n}\left(R+\frac{\ell-n}{T}\right)\phi d\mu
			\\\nonumber
			&\leq&\left(\frac{n}{T}+1+T^{\frac{1}{2}}\right)\int_{M^n} \phi d\mu,
		\end{eqnarray}
		where we have used the facts that $R_{g(\tau)}\ge -1$ and that $\ell\geq -T^{\frac{3}{2}}$; to see that the latter is true, one needs only to recall the definition (\ref{l-energy})(\ref{reduced_l}) of the reduced distance and the fact that $R_{g(\tau)}\geq -1$ for all $\tau\in [0,T]$ .

		Next, we take
		\begin{eqnarray}\label{cutoff}
			\phi:=\varphi_{A}^2(4\pi T)^{-\frac{n}{2}}e^{-\ell},
		\end{eqnarray}
		where 
		$$\varphi_A(x)=\eta\left(\frac{d_{g(T)}(x,x_0)}{A}\right),$$
		$x_0$ is the fixed base point, and $\eta:[0,\infty)\to[0,1]$ is a smooth and decreasing function satisfying $\eta|_{[0,1]}\equiv 1$, $\eta|_{[2,\infty)}\equiv 0$, and $-2\le \eta'(s)\le 0$ for all $s\in[0,\infty)$. Obviously, $\phi$ is a valid test function of (\ref{eq_l_7}), since $\varphi_A$ is smooth and compactly supported and $\ell$ is locally Lipschitz. Then, (\ref{eq-temp-1}) becomes
		\begin{eqnarray*}
			\int_{M^n}\Big(\varphi_{A}^2|\nabla \ell|^2-4\langle\nabla \ell,\nabla\varphi_{A}\rangle\varphi_{A}\Big)(4\pi T)^{-\frac{n}{2}}e^{-\ell}d\mu\leq\left(\frac{n}{T}+1+T^{\frac{1}{2}}\right)\int_{M^n}\varphi_{A}^2(4\pi T)^{-\frac{n}{2}}e^{-\ell}d\mu.
		\end{eqnarray*}
		Hence we have
		\begin{eqnarray*}
			\int_{M^n}\varphi_{A}^2|\nabla \ell|^2(4\pi T)^{-\frac{n}{2}}e^{-\ell}d\mu&\leq&\left(\frac{n}{T}+1+T^{\frac{1}{2}}\right)\int_{M^n}\varphi_{A}^2(4\pi T)^{-\frac{n}{2}}e^{-\ell}d\mu
			\\
			&&+4\int_{M^n}\langle\nabla \ell,\nabla\varphi_{A}\rangle\varphi_{A}(4\pi T)^{-\frac{n}{2}}e^{-\ell}d\mu
			\\
			&\leq&\left(\frac{n}{T}+1+T^{\frac{1}{2}}\right)\int_{M^n}\varphi_{A}^2(4\pi T)^{-\frac{n}{2}}e^{-\ell}d\mu
			\\
			&&+\frac{1}{2}\int_{M^n}\varphi_{A}^2|\nabla \ell|^2(4\pi T)^{-\frac{n}{2}}e^{-\ell}d\mu
			\\
			&&+8\int_{M^n}|\nabla\varphi_{A}|^2_{g_T}(4\pi T)^{-\frac{n}{2}}e^{-\ell}d\mu,
		\end{eqnarray*}
		and subsequently
		\begin{eqnarray*}
			\int_M\varphi_{A}^2|\nabla \ell|^2(4\pi T)^{-\frac{n}{2}}e^{-\ell}d\mu&\leq& \left(\frac{64}{A^2}+\frac{2n}{T}+2+2T^{\frac{1}{2}}\right)\int_M(4\pi T)^{-\frac{n}{2}}e^{-\ell}d\mu
			\\
			&=&\left(\frac{64}{A^2}+\frac{2n}{T}+2+2T^{\frac{1}{2}}\right)\mathcal{V}(T)
			\\
			&\leq&\frac{64}{A^2}+\frac{2n}{T}+2+2T^{\frac{1}{2}},
		\end{eqnarray*}
		where we have applied the fact that $\mathcal V(T)\leq 1$ due to Theorem \ref{Monotonicity}(1). Taking $A\to \infty$, we obtain (\ref{l2-gradient}). The boundedness of the $L^2$-norm of $(4\pi T)^{-n/4}e^{-\ell/2}$ is but a consequence of the fact that $\mathcal V(T)\leq 1$. 
	\end{proof}

	\begin{lem}\label{reduced_volume_estimate}
		Under the $\mathcal L$-complete assumption and \eqref{scalar lower}\eqref{isoperimetric}, we have $$\mathcal V(T)\geq e^{-T}(1-\delta).$$ 
	\end{lem}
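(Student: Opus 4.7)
The plan is to sandwich the integral
$$
I := \int_M (T|\nabla\ell|^2 + \ell - n)(4\pi T)^{-n/2} e^{-\ell}\, d\mu_{g_0}
$$
between an upper bound coming from Perelman's differential inequality \eqref{eq_l_5} combined with the scalar curvature hypothesis $R_{g_0}\ge -1$, and a lower bound coming from the scaled logarithmic Sobolev inequality implied by the almost-Euclidean isoperimetric assumption \eqref{isoperimetric}. Both bounds are evaluated on the ``final'' slice $\tau=T$ of the backward flow, and throughout the $L^2$-estimate of Lemma \ref{grad} will legitimize the integration by parts and the use of a non-compactly-supported $W^{1,2}$ test function.

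For the upper bound, I would test the weak inequality \eqref{eq_l_7} at $\tau=T$ against the Lipschitz compactly supported function $\phi = \chi_A^2(4\pi T)^{-n/2}e^{-\ell}$, where $\chi_A$ is the cutoff introduced in the proof of Lemma \ref{grad}. Writing $\psi := (4\pi T)^{-n/2}e^{-\ell}$ and expanding $-2\nabla\ell\cdot\nabla\phi$ produces the good term $2\chi_A^2\psi|\nabla\ell|^2$ and a cross term $-4\chi_A\psi\,\nabla\ell\cdot\nabla\chi_A$. Cauchy--Schwarz estimates the cross term by $4\bigl(\int\chi_A^2\psi|\nabla\ell|^2\bigr)^{1/2}\bigl(\int\psi|\nabla\chi_A|^2\bigr)^{1/2}$, which tends to $0$ as $A\to\infty$ by Lemma \ref{grad} and $\mathcal{V}(T)\le 1$. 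Passing to the limit yields
$$
\int_M\!\left(|\nabla\ell|^2 + R_{g_0} + \tfrac{\ell-n}{T}\right)\psi\, d\mu \le 0,
$$
and multiplying by $T$ and invoking $R_{g_0}\ge -1$ gives the upper bound $I \le T\,\mathcal{V}(T)$.

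For the lower bound, I would apply Theorem \ref{logsob} not to $(M,g_0)$ directly but to the conformally rescaled metric $(2T)^{-1}g_0$. The isoperimetric constant is scale-invariant, so \eqref{isoperimetric} is preserved with the same $\delta$, and unwinding the scaling produces the standard ``$T$-scaled'' log-Sobolev inequality
$$
4T\!\int_M |\nabla\varphi|^2\, d\mu_{g_0} - \int_M \varphi^2\log\varphi^2\, d\mu_{g_0} \ge \tfrac{n}{2}\log(4\pi T) + n + \log(1-\delta)
$$
valid for every $\varphi\in W^{1,2}(M,g_0)$ with $\|\varphi\|_2 = 1$. I then plug in $\varphi^2 := (4\pi T)^{-n/2}e^{-\ell}/\mathcal{V}(T)$, which is a legitimate test function by Lemma \ref{grad} and by the positivity $\mathcal{V}(T)>0$ from Theorem \ref{Monotonicity}(1). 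Using $|\nabla\varphi|^2 = \tfrac14|\nabla\ell|^2\varphi^2$ and $\log\varphi^2 = -\tfrac{n}{2}\log(4\pi T) - \ell - \log\mathcal{V}(T)$, the $\tfrac{n}{2}\log(4\pi T)$ cancels on the two sides, and multiplying through by $\mathcal{V}(T)$ yields
$$
I \ge \bigl(\log(1-\delta) - \log\mathcal{V}(T)\bigr)\mathcal{V}(T).
$$

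Combining the upper and lower bounds and dividing by $\mathcal{V}(T)>0$ gives $\log(1-\delta) - \log\mathcal{V}(T) \le T$, which is equivalent to $\mathcal{V}(T)\ge e^{-T}(1-\delta)$. The main technical obstacle lies not in the algebra but in the approximation arguments required to integrate by parts and to apply the log-Sobolev inequality with a test function that is merely $W^{1,2}$ and is not compactly supported. The enabler for both is the $L^2$-gradient estimate of Lemma \ref{grad}, which places $(4\pi T)^{-n/4}e^{-\ell/2}$ in $W^{1,2}(M,g_0)$ and lets the cutoff limit $A\to\infty$ be carried out without losing any of the quantitative terms.
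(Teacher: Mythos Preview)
Your proposal is correct and follows essentially the same approach as the paper's proof: derive the upper bound $I\le T\,\mathcal V(T)$ by testing \eqref{eq_l_7} at $\tau=T$ against $\chi_A^2(4\pi T)^{-n/2}e^{-\ell}$ and letting $A\to\infty$ via Lemma~\ref{grad}, then derive the matching lower bound from the log-Sobolev inequality (Theorem~\ref{logsob}) applied to the rescaled metric $(2T)^{-1}g_0$. The only cosmetic difference is that the paper rescales the whole backward flow to $\tilde g(\tau)=(2T)^{-1}g(2T\tau)$ and works with $\tilde\ell_{1/2}$ and an unnormalized test function, whereas you keep the original $\ell$ and normalize $\varphi$ to unit $L^2$-norm; the resulting algebra is identical.
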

	
	\begin{proof}
		
		Rewriting (\ref{eq-temp-1}) using the same cut-off function as defined in 	(\ref{cutoff}) and the fact that $	R(g_0)\geq -1$, we have
		
		\begin{align*}
			&\int_{M^n}\left(|\nabla \ell|^2+\frac{\ell-n}{T}\right)\varphi_A^2(4\pi T)^{-\frac{n}{2}}e^{-\ell}d\mu
			\\
			\leq&\  4\int_{M^n}\langle\nabla\ell,\nabla\varphi_A\rangle\varphi_A(4\pi T)^{-\frac{n}{2}}e^{-\ell}d\mu
			-\int_{M}R\varphi_A^2(4\pi T)^{-\frac{n}{2}}e^{-\ell}d\mu
			\\
			\leq &\ 4\left(\int_{M^n}|\nabla \ell|^2(4\pi T)^{-\frac{n}{2}}e^{-\ell}d\mu\right)^{\frac{1}{2}}\left(\int_{M^n}|\nabla \varphi_A|^2(4\pi T)^{-\frac{n}{2}}e^{-\ell}d\mu\right)^{\frac{1}{2}}+\int_{M}\varphi_A^2(4\pi T)^{-\frac{n}{2}}e^{-\ell}d\mu
			\\
			\leq &\ 4\left(\frac{2n}{T}+2+2T^{\frac{1}{2}}\right)^{\frac{1}{2}}\left(\frac{4}{A^2}\mathcal V(T)\right)^{\frac{1}{2}}
			+\int_{M}\varphi_A^2(4\pi T)^{-\frac{n}{2}}e^{-\ell}d\mu,
		\end{align*}
		where we have applied Lemma \ref{grad}. Taking $A\to\infty$, we have
		\begin{equation}\label{reversesobolev}
			\int_{M^n}\left(T|\nabla\ell|^2+\ell-n\right)(4\pi T)^{-\frac{n}{2}}e^{-\ell}d\mu\leq T\int_{M}(4\pi T)^{-\frac{n}{2}}e^{-\ell}d\mu.
		\end{equation}

		Rescaling $g(\tau)$ as 
		$$\tilde{g}(\tau)=(2T)^{-1}g(2T\tau).$$
		and denote by $\tilde{\ell}$ the reduced length with respect to $\tilde{g}(\tau)$. 	
		Then  (\ref{reversesobolev}) becomes
		\begin{equation}\label{reversesobolev2}
			\int_{M^n}\left(\tfrac{1}{2}|\nabla \tilde{\ell}_{1/2}|^2+\tilde{\ell}_{1/2}-n\right)(2\pi )^{-\frac{n}{2}}e^{-\tilde{\ell}_{1/2}}d\tilde{\mu}\leq T\int_{M}(2\pi )^{-\frac{n}{2}}e^{-\tilde{\ell}_{1/2}}d\tilde{\mu}.
		\end{equation}	
		Letting
		$$\varphi(x):=(2\pi)^{-\frac{n}{4}}e^{-\frac{1}{2} \tilde{\ell}_{1/2}(x)}\in W^{1,2}(M),$$
		we can rewrite (\ref{reversesobolev2}) as
		\begin{align*}
			& \int_{M}\left(2|\nabla\varphi|^2-\varphi^2\log\varphi^2\right)d\tilde{\mu}\\
			\leq&\ \left(n+\frac{n}{2}\log(2\pi)\right)\int_{M}\varphi^2d\tilde{\mu}+T\int_{M}\varphi^2d\tilde{\mu}
			\\
			\leq &\ \int_{M}\left(2|\nabla\varphi|^2-\varphi^2\log\varphi^2\right)d\tilde{\mu}+\left(\log\left(\int_{M}\varphi^2 d\tilde{\mu}\right)-\log(1-\delta)+T\right)\int_{M}\varphi^2 d\tilde{\mu},
		\end{align*}
		where in the second inequality we have applied Theorem \ref{logsob}. Consequently, we have
		$$\int_{M}\varphi^2 d\tilde{\mu}\geq e^{-T}(1-\delta).$$
		
	\end{proof}

	Now we proceed to prove Theorem \ref{pseudo_locality} by contradiction. Assume that there exist $\alpha $ and sequences of positive numbers $\delta_{i} \rightarrow 0^{+}, \varepsilon_{i} \rightarrow 0^{+}$, and smooth $\mathcal L$-complete pointed Ricci flows $\left(\mathcal{M}_{i}^{n}, g_{i,t}\right)_{t \in\left[0, \varepsilon_{i}^{2}\right]}, i\in\mathbb{N},$ satisfying \eqref{scalar lower} \eqref{isoperimetric}.
	Furthermore, there exist ``bad'' points in space-time $(x_{i},t_i) \in \mathcal{M}_{i}\times \left(0, \varepsilon_{i}^{2}\right]$ satisfying
	\begin{align}\label{counterstatement}
		\left|\mathrm{Rm}_{g_{i}}\right|\left(x_{i}, t_{i}\right)>\frac{\alpha}{t_{i}}
	\end{align}
	for all $i \in \mathbb{N}$.

	We can apply a point-picking method as in Perelman's proof of pseudolocality theorem. The details can be found in \cite{RFV3}. Arguing in the same way as \cite[Lemma 21.12]{RFV3} and \cite[Chapter 22 $\S 1$]{RFV3}, we may adjust $\left(x_{i}, t_i\right)$ such that they satisfy not only \eqref{counterstatement}, but also
	\begin{align}\label{parabolic curvature bound}
		\left|\mathrm{Rm}_{g_{i}}\right|(x, t) \leq 4 Q_{i}\qquad\text{ for all }\qquad (x,t)\in B_{g_{i,t_i}}\left(x_i,A_i Q_{i}^{-1 / 2}\right)\times \left[t_i-\tfrac{\alpha}{2} Q_{i}^{-1},t_i\right]
	\end{align}
	where $Q_{i}:= \left|\mathrm{Rm}_{g_{i}}\right|(x_i, t_i)>\frac{\alpha}{t_i}$ and $A_i\to \infty$.
	
	\begin{lem}\label{reduced_volume}
		For each $i$, there exists a time
		$$
		\tilde{t}_{i} \in\left[t_i-\frac{\alpha}{2} Q_{i}^{-1}, t_i\right)
		$$
		such that
		$$
		\mathcal {V}_i(t_i-\tilde{t}_{i}) \leq 1-\beta_{1}
		$$
		where $\mathcal {V}_i$ is the reduced volume of $g_i$ based at  $\left(x_{i}, t_i\right)$ and $\beta_{1}\in(0,1)$ is a constant independent of $i$.
	\end{lem}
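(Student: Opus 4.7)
The plan is to argue by contradiction, combining parabolic rescaling with the rigidity statement Theorem \ref{Monotonicity}(3). Suppose the conclusion fails; then after passing to a subsequence, we may assume there exist $\beta_{1,i}\to 0^+$ such that
\[
\mathcal V_i(t_i-\tilde t_i)\ge 1-\beta_{1,i}\qquad\text{for all}\ \tilde t_i\in[t_i-\tfrac{\alpha}{2}Q_i^{-1},t_i).
\]
Parabolically rescale by setting $\tilde g_i(\tau):=Q_i\,g_i(t_i-\tau Q_i^{-1})$, a backward Ricci flow on $\tau\in[0,\alpha/2]$. By \eqref{counterstatement} and \eqref{parabolic curvature bound} one has $|\mathrm{Rm}_{\tilde g_i(0)}|(x_i)=1$ and $|\mathrm{Rm}_{\tilde g_i}|\le 4$ on $B_{\tilde g_i(0)}(x_i,A_i)\times[0,\alpha/2]$; and since Perelman's reduced distance and reduced volume are parabolic-scaling invariant, the reduced volume $\tilde{\mathcal V}_i$ of $\tilde g_i$ based at $(x_i,0)$ satisfies $\tilde{\mathcal V}_i(\tau)\ge 1-\beta_{1,i}$ for every $\tau\in(0,\alpha/2]$.

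The next step is to extract a smooth pointed Cheeger--Gromov--Hamilton limit $(\tilde M_\infty,\tilde g_\infty(\tau),x_\infty)$ of the rescaled flows. Shi's derivative estimates deliver uniform $C^k$-bounds on slightly shrunk parabolic subdomains; the required injectivity radius lower bound at $x_i$ is a consequence of the Perelman-type $\kappa$-noncollapsing that follows from $\tilde{\mathcal V}_i(\tau)$ being close to $1$. Since $A_i\to\infty$, the limit is a smooth complete backward Ricci flow on $[0,\alpha/2]$ satisfying $|\mathrm{Rm}_{\tilde g_\infty(0)}|(x_\infty)=1$ by smooth convergence. It remains to verify that the reduced volume passes to the limit: by \eqref{eq_l_8} the integrand in \eqref{WFRV} is pointwise dominated on $T_{x_i}M\cong\mathbb{R}^n$ by its $\tau\to 0^+$ value $(4\pi)^{-n/2}e^{-|V|^2/4}$, and short $\mathcal L$-geodesics with bounded initial velocity remain inside the compact parabolic region where smooth convergence holds, so a dominated-convergence argument yields $\tilde{\mathcal V}_\infty(\tau)\ge 1$ for all $\tau\in(0,\alpha/2]$. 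Combined with the upper bound from Theorem \ref{Monotonicity}(1), this forces $\tilde{\mathcal V}_\infty\equiv 1$.

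Applying the rigidity statement Theorem \ref{Monotonicity}(3) to $\tilde g_\infty$ then implies that this limit flow is the standard static Euclidean flow, contradicting $|\mathrm{Rm}_{\tilde g_\infty(0)}|(x_\infty)=1$. The main obstacle I anticipate is making the Cheeger--Gromov--Hamilton extraction and the passage of the reduced volume to the limit rigorous in the present setting: since the original flows are only $\mathcal L$-complete and may have unbounded global curvature, one must localize every convergence argument to the parabolic neighborhood on which \eqref{parabolic curvature bound} is available, and one must confirm that the requisite $\kappa$-noncollapsing at $(x_i,0)$ at the relevant scale really does follow from the near-maximal reduced-volume hypothesis rather than from an a priori global curvature bound.
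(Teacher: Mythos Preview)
Your proposal is correct and follows essentially the same route as the paper: argue by contradiction, parabolically rescale so that $|\mathrm{Rm}|(x_i,0)=1$ with the curvature bound \eqref{parabolic curvature bound} on $B(x_i,A_i)\times[-\alpha/2,0]$, use the near-maximal reduced volume to obtain $\kappa$-noncollapsing and hence a smooth Cheeger--Gromov--Hamilton limit, pass the reduced volume to the limit by bounding the tails via the pointwise monotonicity \eqref{eq_l_8} together with the fact that $\mathcal L$-geodesics with bounded initial velocity remain in the region of smooth convergence, and conclude by the rigidity Theorem~\ref{Monotonicity}(3). The paper makes the last localization step explicit by proving a quantitative claim that $|V|_{\hat g_i(0)}\le c(\alpha)A$ forces $\gamma_V|_{[0,\bar\tau]}\subset B_{\hat g_i(0)}(x_i,\tfrac12 A)$, which is exactly the ingredient you flagged as the main obstacle; once that is in hand, both your anticipated difficulties (local compactness and noncollapsing) are resolved as you outlined.
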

	
	\begin{proof}
		We apply the following rescaling and time-shifting to the Ricci flows
		\begin{equation}\label{rescaling}
			g_{i,t} \longrightarrow \hat{g}_{i,t} \doteqdot Q_{i} g_{i,t_i+Q_{i}^{-1} t},
		\end{equation}
		and proceed to show that for each $i\in\mathbb{N}$, there is a $\hat{t}_i\in [-\frac{\alpha}{2},0)$ with
		$$
		\mathcal {\hat{V}}_i(-\hat{t}_i)   \leq 1-\beta_{1},
		$$
		where $\mathcal{\hat V}_i$ is the reduced volume of $\hat{g}_i$ based at $(x_i,0)$. \eqref{parabolic curvature bound} now becomes
		\begin{align}\label{parabolic curvature bound 1}
			\left|\mathrm{Rm}_{\hat g_{i}}\right|(x, t) \leq 4 \qquad\text{ for all }\qquad (x,t)\in B_{\hat g_{i,0}}\left(x_i,A_i\right)\times \left[-\tfrac{\alpha}{2},0\right].
		\end{align}
		
		We argue by contradiction. Assume that, by passing to a subsequence,
		\begin{align}\label{contradition V}
			\lim_{i\to\infty}\inf_{\tau\in(0,\frac{\alpha}{2}]}\mathcal {\hat{V}}_i(\tau)=1.
		\end{align}

		\noindent \textbf{Claim.} There are positive constants $c(\alpha)$ and $C(\alpha)$ with the following property. Fix any large $i\in\mathbb{N}$ and any $\bar\tau\in(0,\frac{\alpha}{4}]$. Let $\gamma_V:[0,\bar\tau]\to M$ be a minimal $\mathcal L$-geodesic with respect to $\hat g_i(\tau)=\hat g_{i,-\tau}$ starting from $(x_i,0)$ with $\lim_{\tau\to 0}\sqrt{\tau}\gamma'_V(\tau)=V\in T_{x_i}M$. For any $A\in [10C(\alpha),A_i]$, if $|V|_{\hat g_{i}(0)}\le c(\alpha)A$, then $\gamma_V\big|_{[0,\bar\tau]}\subset B_{\hat g_i(0)}(x_i,\tfrac{1}{2}A)$. 
		
		\begin{proof}[Proof of the Claim]
			Writing $X(\tau)=\gamma_V'(\tau)$, we may apply the $\mathcal L$-geodesic equation to obtain (c.f. \cite[(26.4)]{KL})
			\begin{align*}
				\frac{d}{d\tau}(\tau|X|^2_{\hat g_i(\tau)})=-2\tau\mathrm{\mathrm{Ric}}_{\hat g_i}(X,X)+\tau \langle X,\nabla R_{\hat g_i}\rangle_{\hat g_i(\tau)}.
			\end{align*}
			By \eqref{parabolic curvature bound 1} and Shi's estimates, we have 
			\begin{align*}
				\left|\mathrm{Rm}_{\hat g_i}\right|\le 4,\qquad \left|\nabla R_{\hat g_i}\right|\le C(n,\alpha),\qquad \text{for all}\qquad (x,\tau)\in B_{\hat g_i(0)}(x_i\tfrac{1}{2}A_i)\times[0,\tfrac{\alpha}{4}].
			\end{align*}
			Then, as long as $\gamma_V\big|_{[0,\tau]}\subset B_{\hat g_{i}(0)}(x_i,\tfrac{1}{2}A_i)$, we may estimate
			\begin{align*}
				\frac{d}{d\tau}\left(\tau|X|^2_{\hat g_i(\tau)}\right)\le 8\tau|X|^2_{\hat g_i(\tau)}+C\tau|X|_{\hat g_i(\tau)}\le 9\tau|X|^2_{\hat g_i(\tau)}+C
			\end{align*}
			and by integrating
			\begin{align}\label{bound11}
				\tau|X|^2_{\hat g_i(\tau)}\le e^{4\alpha}|V|^2_{\hat g_i(0)}+C.
			\end{align}
			
			Let us fix an $A\le A_i$. Assume that $\tau'\in (0,\tfrac{\alpha}{4})$ is the first time when $\gamma_V$ has reached the boundary of $B_{\hat g_i(0)}(x_i,\frac{1}{2}A)$. Thus, $\gamma_V\big|_{[0,\tau']}\subset B_{\hat g_i(0)}(x_i,\frac{1}{2}A)$, $\dist_{\hat g_i(0)}(\gamma_V(0),\gamma_V(\tau'))=\frac{1}{2}A$, and 
			\begin{align*}
				|X(\tau)|_{\hat g_i(\tau)}^2\ge e^{-\alpha}|X(\tau)|^2_{\hat g_i(0)}\qquad\text{ for all }\qquad \tau\in[0,\tau'].
			\end{align*}
			Then,
			\begin{align*}
				\tfrac{1}{2}A&=\dist_{\hat g_i(0)}(\gamma_V(0),\gamma_V(\tau')) \le \int_0^{\tau'}|X(\tau)|_{\hat g_i(0)}d\tau\le e^{\alpha/2}\int_0^{\tau'}|X(\tau)|_{\hat g_i(\tau)}d\tau
				\\
				&\le e^{\alpha/2}\left(\int_0^{\tau'}\sqrt{\tau}|X(\tau)|^2_{\hat g_i(\tau)}d\tau\right)^{\frac{1}{2}}\left(\int_0^{\tau'}\tau^{-\frac{1}{2}}d\tau\right)^{\frac{1}{2}}
				\\
				&\le C(\alpha)\left(\int_0^{\tau'}\tau^{-\frac{1}{2}}\left(e^{4\alpha}|V|^2_{\hat g_i(0)}+C\right)d\tau\right)^{\frac{1}{2}}
				\\
				&\le C(\alpha)(|V|_{\hat g_i(0)}+1),
			\end{align*}
			where we have applied \eqref{bound11}. If $A\ge 10 C(\alpha)$, then the above computation yields a contradiction if we assume $|V|_{\hat g_i(0)}\le \frac{1}{4C(\alpha)}A$. This proves the claim.
		\end{proof}
		
		By contradictory assumption \eqref{contradition V} and the proof of Perelman's no local collapsing theorem (c.f. \cite[Theorem 26.2]{KL}), it is easy to see that
		\begin{align*}
			\inf_{i}\operatorname{inj}_{g_{i,0}}(x_i)>0.
		\end{align*}
		Therefore, we may extract a subsequence from $\big\{\big(M_i,\hat g_{i,t},x_i\big)_{t\in[-\frac{\alpha}{2},0]}\big\}_{i=1}^\infty$ which converges to a smooth Ricci flow with bounded curvature
		\begin{align*}
			(M_\infty,\hat g_{\infty,t},x_\infty)_{t\in(-\frac{\alpha}{2},0]}.
		\end{align*}
		Since $|\mathrm{Rm}_{\hat g_\infty}|(x_\infty,0)=\lim_{i\to\infty}|\mathrm{Rm}_{\hat g_i}|(x_i,0)=\lim_{i\to\infty}Q_i^{-1}|\mathrm{Rm}_{ g_i}|(x_i,t_i)=1$, $\hat g_\infty$ is not flat. 
		
		On the other hand, letting $\mathcal Lexp_{x_i}^\tau$ be the $\mathcal L$-exponential map of $\hat g_i$ based at $(x_i,0)$. The claim above shows that
		\begin{align*}
			M\setminus B_{\hat g_i(0)}(x_i,\tfrac{1}{2}A)\subset \mathcal Lexp_{x_i}^\tau\Big(\mathbb{R}^n\setminus\{|V|\le c A\}\Big),\ \ \ \text{ for all }\ \tau\in [0,\tfrac{\alpha}{4}]\ \text{ and }\ A\in [10C(\alpha),A_i].
		\end{align*}
		By the monotonicity of the $\mathcal L$-Jacobian \eqref{eq_l_8} and the fact that
		$$\lim_{\tau\to0}(4\pi \tau)^{-\frac{n}{2}}
		e^{-l(\gamma_V(\tau),\tau)}
		\mathcal{L}J_V(\tau)=(4\pi)^{-\frac{n}{2}}\exp\left(-\tfrac{|V|^2}{4}\right),$$
		we have that, for each $\tau\in(0,\frac{\alpha}{4}]$,
		\begin{align*}
			\int_{M\setminus B_{\hat g_i(0)}(x_i,\frac{1}{2}A)}(4\pi\tau)e^{-\ell_{\hat g_i}}(\cdot,\tau)d\hat g_i(\tau)\le \int_{|V|\ge cA}(4\pi)^{-\frac{n}{2}}\exp\left(-\tfrac{|V|^2}{4}\right)dV,\ \ \ \text{ for all }\  A\in [10C(\alpha),A_i].
		\end{align*}
		It is then clear from \eqref{contradition V} that
		$$\mathcal V_\infty(\tau)=\lim_{i\to\infty} \mathcal V_i(\tau)=1\qquad \text{ for all }\qquad \tau\in(0,\tfrac{\alpha }{4}],$$
		where $\mathcal V_\infty$ is the reduced volume of $\hat g_\infty$ based at $(x_\infty,0)$. Thus, $\hat g_\infty$ is the flat Euclidean space; this is a contradiction.
	\end{proof}
	
	Now we can give the proof of Theorem \ref{pseudo_locality}.
	\begin{proof}[Proof of Theorem \ref{pseudo_locality}] Applying Lemma \ref{reduced_volume_estimate} to the reduced volumes $\mathcal V_i$ of the Ricci flows $(M_i,g_i)$ based at $(x_i,t_i)$, since $t_i\le \varepsilon_i^2$, we have that 
		$$\mathcal V_i(t_i)\ge e^{-\varepsilon_i^2}(1-\delta_i).$$
		By the monotonicity of the reduced volume, we also have
		$$\mathcal V_i(\tau)\ge e^{-\varepsilon_i^2}(1-\delta_i)\qquad\text{for all }\qquad \tau\in(0,t_i].$$
		This is clearly a contradiction to Lemma \ref{reduced_volume} when $i$ is large.
		
	\end{proof}
	
	Theorem \ref{uniqueness} just follows from Theorem \ref{pseudo_locality} and Theorem 3.1 in \cite{C}.

	\section{uniqueness on Eulidean space}

	\begin{proof}[Proof of Theorem \ref{main0}]
		
		By Theorem \ref{conditions_l-complete}, the Ricci flow in question is $\mathcal L$-complete (in the second case, we also assume $T\le \overline{T}(\alpha,\gamma)$ for the moment). Then Theorem \ref{uniqueness} implies that the Ricci flow is one with bounded curvature. The conclusion then follows from \cite{CL}.
		
		We remark here that in case (2), once we established the strong uniqueness assuming $T\le \overline{T}(\alpha,\gamma)$, then the same result holds also for any $T>0$ by splitting $[0,T]$ into smaller intervals, each with length no greater than $\overline{T}(\alpha,\gamma)$.
	\end{proof}

\end{document}